\theoremstyle{definition} 
\theoremstyle{plain} \newtheorem{lemma}{Lemma}[section]
\theoremstyle{plain} 
\theoremstyle{plain} \newtheorem{theorem}{Theorem}[section]
\theoremstyle{plain} \newtheorem{corollary}{Corollary}[section]
\theoremstyle{plain} 
\theoremstyle{plain} 
\theoremstyle{plain} 
\theoremstyle{remark} 
\theoremstyle{remark} 
\theoremstyle{plain} 
\theoremstyle{remark} \newtheorem{remark}{Remark}[section]
\DeclareMathOperator{\Ima}{im}
\numberwithin{equation}{section}
\newcommand{\namedthm}[2]{\theoremstyle{plain}
   \newtheorem*{thm#1}{#1}\begin{thm#1}#2\end{thm#1}}
\title{Conformally invariant rigidity theorems on four-manifolds with boundary}
\author{Siyi Zhang}
\address{Department of Mathematics, University of Notre Dame, Notre Dame, Indiana 46556, USA}
\email{szhang25@nd.edu}
\begin{document}
\date{\today}
\maketitle
\begin{abstract} 
In the article we introduce new conformal and smooth invariants on compact, oriented four-manifolds with boundary. In the first part, we show that ``positivity'' conditions on these invariants will impose topological restrictions on underlying manifolds with boundary, which generalizes the results on closed four-manifolds by M. Gursky and on conformally compact Einstein four-manifolds by S.-Y. A. Chang, J. Qing, and P. Yang. In the second part, we study Weyl functional on four-manifolds with boundary and establish several conformally invariant rigidity theorems. As applications, we prove some rigidity theorems for conformally compact Einstein four-manifolds. These results generalize the work on closed four-manifolds by S.-Y. A. Chang, J. Qing, and P. Yang and rigidity theorem for conformally compact Einstein four-manifolds by G. Li, J. Qing, and Y. Shi. A crucial idea of the proofs is to understand the expansion of a smooth Riemannian metric near the boundary. It is noteworthy to point out that we rule out some examples arising from the study of closed manifolds in the setting of manifolds with umbilic boundary. 

\end{abstract}

\section{Introduction}

In this article we introduce new conformal and smooth invariants and study the geometry and topology of compact, oriented four-manifolds with boundary by investigating these invariants. The paper is divided into two parts. In the first part, we study the topological implications of the the ``positivity'' conditions on conformal invariants. Our main results are partially motivated by the corresponding works on compact four-manifolds without boundary and conformally compact Einstein four-manifolds. For compact four-manifolds without boundary M. Gursky established the following theorem in \cite{Gur98}:
\namedthm{Theorem A}{(\cite{Gur98}, Corollary F; see also \cite{CGY02} Corollary B) Let $(M^4,g)$ be a closed, oriented four-manifold with 
\begin{align}\label{closed positive}
    Y(M^4,[g]) > 0,\,\,\,\,\int\sigma_2(P_{g})\,dv_{g} > 0.
\end{align}
Then $H^1(M^4) = 0$.}
This result reveals that the positivity condition (\ref{closed positive}) on conformal invariants imposes topological restrictions on the underlying manifold. The Yamabe invariant $Y(M^4,[g])$ of $(M^4,g)$ is defined as
\begin{align}
    {Y}(M^4,[g]) = \inf_{\widetilde{g}\in[g],Vol(\tilde{g}) = 1} \int_MR_{\widetilde{g}}\,dv_{\widetilde{g}},
\end{align}
where $R_g$ is the scalar curvature of $g$ and $[g] = \{ e^{2f}g \, : \, f\in{C^{\infty}(M)} \}$ denotes the conformal class of $(M^4,g)$. It is clear that $Y(M^4,[g])$ is a conformal invariant. 

In four dimensions, $P_{g}$ is the Schouten tensor defined as
\begin{align}
    P = \frac{1}{2}\left(Ric - \frac{1}{6}Rg\right),
\end{align} 
and $\sigma_k(P)$ is the $k$-th elementary symmetric polynomial applied to the eigenvalues of $g^{-1}P$. In addition, we have the Chern-Gauss-Bonnet formula:
\begin{align}\label{closed CGB}
    8\pi^2\chi(M^4) = \int_{M}||W_g||^2\,dv_{g} + 4\int_{M}\sigma_2(P)\,dv_{g},
\end{align}
where $W_{g}$ is the Weyl curvature and $\chi(M^4)$ is the Euler characteristic of $M^4$. It is well-known that $||W_g||^2\,dv_{g}$ is a pointwise conformal invariant in four dimensions and $\chi(M^4)$ is a topological invariant. It follows from (\ref{closed CGB}) that $\int_{M}\sigma_2(P)\,dv_{g}$ is a conformal invariant. 

In order to state our main results, we need to establish some additional notation. From now on, we assume $(M^4,\partial{M^4} = \Sigma^3,g)$ is a compact, oriented Riemannian four-manifold with boundary. Denote by $L$ and $H$ the second fundamental form and mean curvature of the boundary, respectively. In this note, we shall call the boundary $\Sigma^3$ umbilic if $L_{ij} = \mu g_{ij}$ for some smooth function $\mu$ on $\Sigma^3$ and totally geodesic if $L_{ij} \equiv 0$ on $\Sigma^3$. The first Yamabe invariant of $(M^4,\Sigma^3,g)$ is defined as
\begin{align}
    {Y}(M^4,\Sigma^{3},[g]) = \inf_{\widetilde{g}\in[g],Vol(\tilde{g}) = 1} \left(\int_MR_{\widetilde{g}}\,dv_{\widetilde{g}}+2\int_{\Sigma}H_{\widetilde{g}}\,d\sigma_{\widetilde{g}}\right).
\end{align}
By the work of J. Escobar \cite{Esc92}, ${Y}(M^4,\Sigma^{3},[g])$ is always attained by a metric $g_{Y} \in [g]$ with constant scalar curvature and minimal boundary. If in addition the boundary is umbilic, then the boundary is totally geodesic. See Section \ref{prelim} for more details. On a compact Riemannian four-manifold with boundary, we have the following Chern-Gauss-Bonnet formula :
\begin{align}\label{CGB}
    8\pi^2\chi(M^4,\Sigma^3)=\int_M||W_g||^2dv_g+4\left(\int_M\sigma_2(P_g)\,dv_g+\frac{1}{2}\int_{\Sigma}\mathcal{B}_g\,d\sigma_g\right),
\end{align}
where
\begin{align}
    \mathcal{B}_g = \frac{1}{2}R_gH - R_{00}H - R_{kikj}L^{ij} + \frac{1}{3}H^3 - H|L|^2 + \ \frac{2}{3}trL^3,
\end{align}
$L$ and $H$ are the second fundamental form and mean curvature of $\Sigma^3$, Latin letters run through $1,2,3$ as tangential directions, and $0$ is the outward normal direction on $\Sigma^3$. 
In place of $\int_M\sigma_2(P_g)\,dv_g$ we have the conformal invariant \cite{Chen09}
\begin{align}
    \mathcal{E}([g]) := \int_M\sigma_2(P_g)\,dv_g+\frac{1}{2}\int_{\Sigma}\mathcal{B}_g\,d\sigma_g.
\end{align}
We remark that $\mathcal{E}([g])$ is the same as $\int_M\sigma_2(P_g)\,dv_g$ when the boundary is totally geodesic since $ \mathcal{B}_g \equiv 0$. We now introduce the following conformal classes on a compact four-manifold with boundary $(M^4,\Sigma^3)$:
\begin{align} \label{Y1}
\mathcal{Y}_{1,b}^{+}(M^4,\Sigma^3) = \{ \, g : Y(M^4,\Sigma^3, [g]) > 0 \, \},
\end{align}
and
\begin{align} \label{Y2def}
\mathcal{Y}_{2,b}^{+}(M^4,\Sigma^3) = \{ \, g \in \mathcal{Y}_{1,b}^{+}(M^4,\Sigma^3) : \mathcal{E}([g]) > 0 \, \}.
\end{align}
It is now clear that the condition $g \in \mathcal{Y}_{2,b}^{+}(M^4,\Sigma^3)$ can be viewed as a generalization of the condition (\ref{closed positive}) in Theorem A. 

Our following result generalizes Theorem A to the setting of four-manifolds with boundary. 
\begin{theorem}\label{Connect}
Let $(M^4,\partial{M^4} = \Sigma^3)$ admit a metric in $\mathcal{Y}_{2,b}^+(M^4,\Sigma^3)\ne\emptyset$ with umbilic boundary. Then $H^1(M^4,\Sigma^3) = H^1(M^4) = 0$. In addition, the boundary $\Sigma^3$ is connected.
\end{theorem}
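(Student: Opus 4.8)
The plan is to translate both cohomological vanishing statements into the non-existence of nontrivial harmonic $1$-forms with suitable boundary conditions, and then to run a Bochner argument in a conformal gauge where the boundary is as simple as possible. First I would invoke the Hodge--Morrey--Friedrichs decomposition for manifolds with boundary: the absolute cohomology $H^1(M^4)$ is represented by harmonic fields $\omega$ (that is, $d\omega = \delta\omega = 0$) satisfying the Neumann condition $\iota_\nu\omega = 0$ on $\Sigma^3$, while the relative cohomology $H^1(M^4,\Sigma^3)$ is represented by harmonic fields satisfying the Dirichlet condition that the tangential part of $\omega$ vanishes on $\Sigma^3$. It therefore suffices to show that any such $\omega$ is identically zero. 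The key reduction is the choice of conformal representative: since $g \in \mathcal{Y}_{2,b}^+(M^4,\Sigma^3)$ and the boundary is umbilic, Escobar's theorem furnishes a Yamabe metric $g_Y \in [g]$ with constant scalar curvature $R_{g_Y} > 0$ and totally geodesic boundary, so that $L \equiv 0$ and $H \equiv 0$ on $\Sigma^3$. Because $\mathcal{B}_{g_Y} \equiv 0$ for a totally geodesic boundary and $\mathcal{E}$ is conformally invariant, the positivity hypothesis reads $\int_M \sigma_2(P_{g_Y})\,dv_{g_Y} = \mathcal{E}([g]) > 0$, and since the cohomology is that of the smooth structure I may work throughout with $g_Y$.

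Next I would write the integrated Weitzenb\"ock formula for a harmonic field $\omega$ on $(M^4,g_Y)$. As $\omega$ is closed and coclosed, $\nabla\omega$ is a symmetric trace-free $2$-tensor and $\tfrac12\Delta|\omega|^2 = |\nabla\omega|^2 + \mathrm{Ric}(\omega,\omega)$; integrating and applying the divergence theorem leaves the single boundary term $\tfrac12\int_{\Sigma}\partial_\nu|\omega|^2\,d\sigma$. Using Fermi coordinates and the boundary expansion of the metric, $\partial_\nu|\omega|^2$ is expressed through $L$ and $H$ acting on the tangential, respectively normal, part of $\omega$ dictated by the Neumann, respectively Dirichlet, condition; both contributions vanish because the boundary is totally geodesic. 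One thereby obtains the clean closed-type identity
\[
0 = \int_M \big(|\nabla\omega|^2 + \mathrm{Ric}(\omega,\omega)\big)\,dv_{g_Y}.
\]

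With this identity in hand, the remaining step is exactly the analytic heart of Gursky's proof of Theorem A, now free of boundary corrections. Writing $\mathrm{Ric} = E + \tfrac14 R_{g_Y}\,g_Y$ with $E$ trace-free, estimating $E(\omega,\omega)$ from below by the extremal eigenvalue of $E$, invoking the refined Kato inequality $|\nabla|\omega||^2 \le \tfrac34|\nabla\omega|^2$ for harmonic $1$-forms, and inserting $u = |\omega|$ into the sharp boundary Yamabe--Sobolev inequality associated with $Y(M^4,\Sigma^3,[g]) > 0$ (whose boundary term drops out since $H\equiv 0$), one converts the identity together with $\int_M\sigma_2(P_{g_Y})\,dv_{g_Y} > 0$ into a contradiction unless $\omega \equiv 0$; this gives $H^1(M^4) = H^1(M^4,\Sigma^3) = 0$. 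I expect this to be the main obstacle, namely faithfully reproducing Gursky's sharp inequality and checking that the conformal manipulations inside it generate no new boundary terms on a totally geodesic boundary. The reduction to a totally geodesic boundary is precisely what makes this feasible; an alternative I would keep in reserve is to double $(M^4,g_Y)$ across $\Sigma^3$ and apply Theorem A directly to the closed double, at the cost of controlling the regularity of the doubled metric through its boundary expansion.

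Finally, connectedness of $\Sigma^3$ follows formally from the vanishing of relative cohomology. In the long exact sequence of the pair,
\[
0 \to H^0(M^4) \xrightarrow{\ i^*\ } H^0(\Sigma^3) \to H^1(M^4,\Sigma^3) \to \cdots,
\]
the vanishing of $H^1(M^4,\Sigma^3)$ forces the restriction $i^*$ to be surjective; since $M^4$ is connected we have $H^0(M^4) = \mathbb{R}$, whence $H^0(\Sigma^3) = \mathbb{R}$ and $\Sigma^3$ has a single component.
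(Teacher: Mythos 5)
Your framework (Hodge--Morrey--Friedrichs representatives with absolute/Neumann and relative/Dirichlet boundary conditions, the integrated Weitzenb\"ock identity whose boundary terms are $-\int_\Sigma L(\omega^T,\omega^T)$ resp.\ $-\int_\Sigma H\,\omega(\nu)^2$ and hence vanish for a totally geodesic boundary, and the long exact sequence of the pair for connectedness of $\Sigma^3$) is correct and agrees with the paper's Lemma 3.1 and its concluding step. The divergence, and the problem, is in how you get from the identity $0=\int_M\bigl(|\nabla\omega|^2+\mathrm{Ric}(\omega,\omega)\bigr)\,dv$ to a contradiction. You propose to rerun Gursky's integral argument on the manifold with boundary, and you describe it as ``inserting $u=|\omega|$ into the sharp boundary Yamabe--Sobolev inequality.'' That understates what Gursky's proof actually does: after the refined Kato inequality and the eigenvalue bound on the trace-free Ricci tensor, one is led to the nonpositivity of the first eigenvalue of a \emph{modified} conformally covariant operator of the form $-a\Delta + (R - c\,|E|)$, and the passage from that to $\int\sigma_2(P)\le 0$ requires conformally deforming by the first eigenfunction (with the attendant regularity issues for $|E|$) and a pointwise algebraic comparison in the new metric. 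Transplanting this to a manifold with boundary requires choosing the conformally covariant boundary operator, setting up the eigenvalue problem with those boundary conditions, and checking that no boundary terms survive in the subsequent manipulations. You explicitly defer exactly this step (``I expect this to be the main obstacle''), so as written the proof has a genuine gap precisely at its analytic heart.

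The paper avoids this entirely by a different and much shorter reduction: it invokes S.~Chen's solution of the boundary $\sigma_2$-Yamabe-type problem (\cite{Chen09}, proof of Theorem 1) to produce a conformal metric $\hat g$ with $R_{\hat g}>0$ and \emph{pointwise} $\sigma_2(P_{\hat g})>0$ and minimal (hence, by umbilicity, totally geodesic) boundary; the algebraic Lemma~1.2 of \cite{CGY02} then gives pointwise $Ric_{\hat g}>0$, and the vanishing of $H^1(M)$ and $H^1(M,\Sigma)$ follows from the elementary Bochner argument for manifolds with positive Ricci curvature and convex boundary (Proposition~3 of \cite{Wang19}). In other words, all of Gursky's hard integral analysis is packaged into the cited conformal deformation theorem, and only the trivial pointwise Bochner vanishing remains. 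If you want to salvage your route without redoing Gursky's estimate with boundary, your reserve option (doubling across the totally geodesic boundary and applying Theorem~A to the smoothed double) does give $H^1(M)=0$ via the fold retraction $N\to M$, but extracting $H^1(M,\Sigma)=0$ from $H^1(N)=0$ needs an extra duality argument; the paper's use of Chen's theorem handles both cohomology groups at once.
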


We now turn to conformally compact Einstein four-manifolds. On a conformally compact Einstein four-manifold $(M^4,g_{+})$, denote by $V$ its renormalized volume which appears in M. Anderson's formula \cite{And01}:
\begin{align}\label{CGB And}
    8\pi^2\chi(M^4,\Sigma^3) = \int_M ||W||^2 \,dv + 6V.
\end{align}
See Section 2 for more details. In \cite{CQY04}, S.-Y. A. Chang, J. Qing, and P. Yang proved the following two theorems: 
\namedthm{Theorem B}{(\cite{CQY04}, Theorem A)
Let $(M^4,g_{+})$ be a conformally compact Einstein four-manifold with conformal infinity of positive Yamabe type. Then
\begin{align}
    V > \frac{1}{3}\frac{4\pi^2}{3}\chi(M)
\end{align}
implies that $M^4$ is a homology $B^4$.
}
\namedthm{Theorem C}{(\cite{CQY04}, Theorem B)
Let $(M^4,g_{+})$ be a conformally compact Einstein four-manifold with conformal infinity of positive Yamabe type. Then
\begin{align}
    V > \frac{1}{2}\frac{4\pi^2}{3}\chi(M)
\end{align}
implies that $M^4$ is diffeomorphic to $B^4$ and the conformal infinity $\Sigma^3$ of $(M^4,g_{+})$ is diffeomorphic to $S^3$.}
Next we obtain by combining (\ref{CGB}) and (\ref{CGB And}) 
\begin{align}
    \mathcal{E}([g]) = \frac{3}{2}V.
\end{align}
Hence, the comparison between $\chi$ and $V$ can be rewritten as comparison between $\int_M ||W_{g}||^2 \,dv_{g}$ and $\mathcal{E}([g])$. Partially inspired by this observation, for metrics $g \in \mathcal{Y}_{2,b}^{+}(M^4,\Sigma^3)$ we introduce a conformal invariant
\begin{align} 
\beta_{b}(M^4,\Sigma^3,[g]) = \dfrac{ \int_{M} \| W_g \|^2 \,dv_g }{\mathcal{E}([g])} \geq 0
\end{align}
and a smooth invariant
\begin{align}
    \beta_{b}(M^4,\Sigma^3) = \inf_{[g]}\beta_{b}(M^4,\Sigma^3,[g]).
\end{align}
If $\mathcal{Y}_{2,b}^{+}(M^4,\Sigma^3) = \emptyset$, set $\beta_{b}(M^4,\Sigma^3) = -\infty$. With these preliminaries, we state two theorems which generalize Theorems B and C to the setting of compact, oriented four-manifolds with boundary.
\begin{theorem}\label{hom}
Let $(M^4,\partial{M^4} = \Sigma^3,g)$ satisfy $0 \leq \beta_b(M^4,\Sigma^3,[g]) < 8$ with umbilic boundary. Then the double of $(M^4,\Sigma^3)$ is homeomorphic to $S^4$, $\Sigma^3$ is a homology $S^3$, and $M^4$ is a homology $B^4$.
\end{theorem}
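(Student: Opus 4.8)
The plan is to pass to the double and reduce the statement to a sharp self-dual Weyl estimate on a closed four-manifold. First I would replace $g$ by the Escobar--Yamabe representative $g_Y \in [g]$, which has constant scalar curvature and minimal boundary; since the boundary is umbilic, Escobar's theorem forces it to be totally geodesic, so $\mathcal{B}_{g_Y} \equiv 0$ and $\mathcal{E}([g]) = \int_M \sigma_2(P_{g_Y})\,dv_{g_Y}$. Because $Y(M^4,\Sigma^3,[g]) > 0$ the scalar curvature of $g_Y$ is a positive constant, and reflecting across the totally geodesic $\Sigma^3$ produces a closed, oriented manifold $(DM^4,\hat g)$ carrying a metric of positive scalar curvature, so $Y(DM^4,[\hat g]) > 0$. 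As $\hat g$ is reflection symmetric, $\int_{DM}\|W_{\hat g}\|^2\,dv = 2\int_M \|W_{g_Y}\|^2\,dv_{g_Y}$ and $\int_{DM}\sigma_2(P_{\hat g})\,dv = 2\,\mathcal{E}([g]) > 0$, whence the doubled conformal invariant equals $\beta_b(M^4,\Sigma^3,[g]) < 8$. Two further facts are essential: $DM^4$ is the boundary of $M^4 \times [0,1]$, so $\tau(DM^4) = 0$, and then the signature formula $12\pi^2\tau = \int(\|W^+\|^2 - \|W^-\|^2)$ forces $\int_{DM}\|W^+\|^2 = \int_{DM}\|W^-\|^2 = \tfrac12\int_{DM}\|W\|^2$, so the hypothesis becomes $\int_{DM}\|W^+\|^2 < 4\int_{DM}\sigma_2(P)$.

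Next I would kill the second homology of $DM^4$. The algebraic identity $\int_{DM}\|W^+\|^2 + 2\int_{DM}\sigma_2(P) = 2\pi^2\big(2\chi(DM) + 3\tau(DM)\big)$, together with Gursky's sharp estimate --- for a closed oriented four-manifold of positive Yamabe type with $b_2^+ \neq 0$ one has $\int\|W^+\|^2 \geq \tfrac{4\pi^2}{3}(2\chi + 3\tau)$, equivalently $\int\|W^+\|^2 \geq 4\int\sigma_2(P)$, with equality realized by $\mathbb{CP}^2$ --- directly contradicts the strict bound just obtained. Hence $b_2^+(DM) = 0$, and since $\tau(DM) = 0$ gives $b_2^- = b_2^+$, also $b_2(DM) = 0$. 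Applying Theorem A to $DM^4$ (legitimate, since $Y > 0$ and $\int_{DM}\sigma_2(P) > 0$) yields $b_1(DM) = 0$. Therefore $DM^4$ has the rational homology of $S^4$ and $\chi(DM) = 2$.

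To upgrade this to "homeomorphic to $S^4$" I would invoke Freedman's classification, for which simple-connectivity is needed. The natural route is a covering argument: the ratio $\beta$, the sign of $Y$, the vanishing of $\tau$, and the signature identity are all preserved under finite covers, so the estimate above reruns verbatim on any finite cover to give $\chi = 2$ there as well; if $\pi_1(DM)$ has order $k$ then the universal cover has $\chi = 2k = 2$, forcing $k = 1$. Once $\pi_1(DM) = 1$, Freedman gives $DM \cong_{\mathrm{homeo}} S^4$. Finally, writing $DM = M \cup_\Sigma M$ and using the reflection symmetry together with Mayer--Vietoris and Lefschetz duality $H_k(M,\Sigma) \cong H^{4-k}(M)$ --- with Theorem \ref{Connect} supplying $H^1(M) = 0$ and the connectedness of $\Sigma$ in low degrees --- propagates the sphere to show $\Sigma^3$ is a homology $S^3$ and $M^4$ a homology $B^4$.

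The hardest point is regularity at $\Sigma^3$. The reflected metric $\hat g$ has only finite regularity across the totally geodesic boundary --- evenness makes it $C^{1,1}$ with continuous second normal derivatives, but the third normal derivative generically jumps --- whereas Gursky's argument runs the Weitzenböck formula on self-dual harmonic two-forms and uses the smooth Gauss--Bonnet and signature integrals. Justifying these at the reduced regularity is exactly where the boundary-expansion analysis advertised in the abstract enters: I expect either to smooth $\hat g$ in a collar while preserving $Y > 0$ and the strict inequality $\int\|W^+\|^2 < 4\int\sigma_2(P)$ (the integrands are continuous, so the strict inequality is stable under small $C^2$ perturbations), or to run a boundary version of the Weitzenböck estimate directly on $(M^4,\Sigma^3)$, the reflection imposing Neumann/Dirichlet conditions on self-dual harmonic forms so that the boundary terms in the integration by parts vanish. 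A secondary subtlety is verifying finiteness of $\pi_1(DM)$ before the covering argument can close the Freedman step.
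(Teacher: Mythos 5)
Your overall strategy -- pass to the Escobar representative, observe that umbilic plus minimal forces the boundary to be totally geodesic, double, and run closed-manifold machinery plus Freedman -- is exactly the paper's, and most of your steps match. Two points deserve comment.

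First, the one genuine gap is the one you flag at the very end and do not close: the finiteness of $\pi_1(DM)$. Your covering argument ($\chi(\widetilde{N}) = |\pi_1|\cdot\chi(N)$, rerun the estimate on the cover) only makes sense once you know the universal cover is compact, and neither positive scalar curvature nor $b_1 = 0$ gives that. The paper fills this exactly where you leave it open: since the doubled metric lies in $\mathcal{Y}_2^{+}(N)$, Corollary B of \cite{CGY02} produces a conformal metric of positive Ricci curvature, Bonnet--Myers makes the universal cover compact, and then the multiplicativity of $\chi$ together with $b_2 = 0$ forces $|\pi_1| = 1$ before Freedman is applied. Without some such input the proof does not terminate, so you should regard this as a required step rather than a ``secondary subtlety.''

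Second, your route to $b_2(DM) = 0$ is a legitimate variant of the paper's. You use $\tau(DM) = 0$ (the double bounds $M\times[0,1]$, or equivalently admits an orientation-reversing involution) to convert $\beta < 8$ into $\int\|W^{+}\|^2 < 4\int\sigma_2(P)$ and then invoke Gursky's sharp $W^{+}$ estimate to kill $b_2^{+}$, hence $b_2$. The paper instead quotes Lemma 2.5 of \cite{CGZ} (itself built on the same Gursky estimate) to get $b_2(N) \le 1$, and then rules out $b_2(N) = 1$ by a purely topological parity argument: with $H^1(M) = H^1(M,\Sigma) = H^1(N) = H^3(N) = 0$ (from Theorem \ref{Connect} and \cite{Gur98}), the Mayer--Vietoris sequence of $N = M\cup_{\Sigma}M'$ yields $H^2(N) \cong H^2(M)\oplus H^2(M)$, so $b_2(N)$ is even. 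Both arguments exploit the symmetry of the double, yours through the signature and the paper's through cohomology; yours is slightly more self-contained, while the paper's parity lemma is reused later (Lemma \ref{TOP}) for the $\beta_b < 8(1+\epsilon_2)$ theorem. Your treatment of the regularity of the reflected metric (the doubled metric is $C^{2,\alpha}$ by the boundary expansion with $L = 0$, then smooth in a collar preserving the strict conformally invariant inequality) is the same as the paper's, as is the final Mayer--Vietoris computation identifying $\Sigma^3$ as a homology $S^3$ and $M^4$ as a homology $B^4$.
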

\begin{theorem}\label{dif}
Let $(M^4,\partial{M^4} = \Sigma^3,g)$ satisfy $0 \leq \beta_b(M^4,\Sigma^3,[g]) < 4$ with umbilic boundary. Then $M^4$ is diffeomorphic to $B^4$ and $\Sigma^3$ is diffeomorphic to $S^3$.
\end{theorem}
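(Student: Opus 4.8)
The plan is to reduce Theorem~\ref{dif} to the closed conformally invariant sphere theorem of Chang--Gursky--Yang by doubling $(M^4,\Sigma^3)$ across its boundary, and then to recover the diffeomorphism types of $M^4$ and $\Sigma^3$ from the induced conformal involution on the sphere. First I would pass to the Escobar--Yamabe representative $g_Y\in[g]$ guaranteed by $g\in\mathcal{Y}_{1,b}^+(M^4,\Sigma^3)$: since the boundary is umbilic, $g_Y$ has constant positive scalar curvature and totally geodesic boundary, so in particular $\mathcal{B}_{g_Y}\equiv 0$. Because $4<8$, Theorem~\ref{hom} already applies and supplies the topological skeleton: the double $DM^4$ is homeomorphic to $S^4$, the boundary $\Sigma^3$ is a homology $S^3$, and $M^4$ is a homology $B^4$. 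The remaining task is purely to upgrade ``homeomorphic'' to ``diffeomorphic,'' which is where the sharper hypothesis $\beta_b<4$ must be spent.

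Next I would form the doubled closed manifold $(DM^4,Dg_Y)$ obtained by gluing two copies of $(M^4,g_Y)$ along $\Sigma^3$, equipped with its canonical reflection involution $\rho$ whose fixed-point set is $\Sigma^3$. Because the boundary is totally geodesic, the boundary integrand vanishes and the conformal invariants double cleanly: $\int_{DM}\|W_{Dg_Y}\|^2\,dv = 2\int_M\|W_{g_Y}\|^2\,dv_{g_Y}$ and $\int_{DM}\sigma_2(P_{Dg_Y})\,dv = 2\,\mathcal{E}([g_Y])$, whence $\beta(DM^4,[Dg_Y]) = \beta_b(M^4,\Sigma^3,[g]) < 4$, while $Y(DM^4,[Dg_Y])>0$ since $R_{g_Y}$ is a positive constant. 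The Chang--Gursky--Yang sphere theorem (positive Yamabe type together with $\int\|W\|^2<4\int\sigma_2(P)$) then shows $DM^4$ is conformally diffeomorphic to the round four-sphere; the competing model $\mathbb{RP}^4$ is excluded because Theorem~\ref{hom} already forces $DM^4$ to be simply connected.

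Finally I would transport the involution through the conformal diffeomorphism $\Phi:(DM^4,[Dg_Y])\to(S^4,[g_{\mathrm{round}}])$. Then $\tilde\rho:=\Phi\circ\rho\circ\Phi^{-1}$ is a conformal automorphism of the round sphere of order two, hence a M\"obius involution; since its fixed-point set $\Phi(\Sigma^3)$ is three-dimensional, $\tilde\rho$ is conjugate in the conformal group $O^+(5,1)$ to the standard reflection across an equatorial $S^3$. Consequently $\Phi(\Sigma^3)$ is a round $S^3$, so $\Sigma^3$ is diffeomorphic to $S^3$, and each of the two complementary regions---each a copy of $M^4$---is diffeomorphic to the standard hemisphere $B^4$, giving the desired conclusion.

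The main obstacle is the regularity of the doubled metric $Dg_Y$ across the gluing locus $\Sigma^3$: totally geodesic doubling produces a metric that is a priori only $C^{1,1}$, while the sphere theorem is formulated for smooth metrics, and any curvature concentrated along $\Sigma^3$ must not contribute spurious distributional terms to $\int_{DM}\|W\|^2$ and $\int_{DM}\sigma_2(P)$. Controlling this requires the boundary expansion of $g_Y$ in Fermi coordinates to show that the odd-order normal derivatives responsible for the loss of smoothness are harmless---either by verifying that the reflected metric is regular enough and that its conformal invariants genuinely double, or by smoothing $Dg_Y$ in a manner that preserves both the positive Yamabe condition and the strict inequality $\beta<4$. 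This expansion analysis, flagged in the abstract, is the technical heart of the argument.
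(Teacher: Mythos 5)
Your first two paragraphs follow the paper's route: pass to the Escobar representative with totally geodesic boundary, double, check that the conformal invariants double so that $\beta(N^4,[g_d])<4$ with positive Yamabe, and worry about regularity across the gluing locus. (On the last point you are slightly pessimistic: the paper's Lemma 2.3 shows that a totally geodesic double is $C^{2,\alpha}$, not merely $C^{1,1}$, because the first-order term in the Fermi expansion is $-2L_{ij}=0$; the residual non-smoothness at third order is then handled by the smoothing argument borrowed from Chang--Qing--Yang.)

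The gap is in your final step. The Chang--Gursky--Yang sphere theorem under $Y>0$ and $\int\|W\|^2<4\int\sigma_2(P)$ concludes only that the closed manifold is \emph{diffeomorphic} to $S^4$ (or $\mathbb{RP}^4$); it does \emph{not} produce a conformal equivalence with the round sphere. Conformal rigidity is the content of Theorem D and requires Bach-flatness, which is not assumed here --- indeed any small perturbation of the round metric satisfies the hypotheses of the sphere theorem without being conformally flat. Consequently your map $\Phi$ is only a diffeomorphism, $\tilde\rho=\Phi\circ\rho\circ\Phi^{-1}$ is merely a smooth involution of $S^4$ with three-dimensional fixed locus, and you cannot invoke the classification of M\"obius involutions; smooth involutions of $S^4$ are not classified, and one cannot conclude from this data that the fixed set is a standard $S^3$ or that the complementary pieces are balls (Smith theory only returns the homological information you already have from Theorem 1.2). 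The paper closes this gap differently: it runs the entire sphere-theorem machinery --- the conformal deformation to a pointwise-pinched metric and then Margerin's normalized Ricci flow --- \emph{equivariantly} with respect to the reflection $\rho$. Since the flow preserves isometries, the limiting constant-curvature metric is $\rho$-invariant, so $\rho$ becomes an isometric reflection of the round $S^4$ whose fixed set is a totally geodesic equator; this is what the paper means by ``the doubling property preserved all the way,'' referring to the last section of \cite{CQY04}. Your argument needs to be repaired along these lines, or else you must first upgrade $\Phi$ to a conformal map, which the hypotheses do not permit.
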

We remark that any compactification of a conformally compact Einstein manifold has umbilic boundary. It is noteworthy to point out that in conformally compact Einstein case, one only needs to require the conformal infinity have positive Yamabe constant, while for Theorems \ref{hom} and \ref{dif} one does need the positivity of ${Y}(M^4,\Sigma^3,[g])$. See Section 2 for more details. Note that $\mathbb{CP}^2$ \emph{cannot} be realized as the double in Theorem \ref{hom}, which is in contrast with the closed case shown by Theorem A of \cite{CGZ}. It reveals that the umbilic condition on the boundary imposes additional symmetry on the double manifold. 

In the second part, we study the critical points of Weyl functional on compact, oriented four-manifolds with boundary. In particular, we shall establish rigidity theorems for a class of metrics satisfying conformally invariant conditions on compact, oriented four-manifolds with boundary. Our results are partially motivated by the following rigidity theorem for {closed} Bach-flat four-manifolds proved by S.-Y. A. Chang, J. Qing, and P. Yang in \cite{CQY}:
\namedthm{Theorem D}{(\cite{CQY})
Suppose $(M^4,g)$ is an oriented, closed Bach-flat four-manifold and $g\in\mathcal{Y}_2^+(M^4)$ satisfies
\begin{align}\label{beta sphere}
    0\leq\beta(M^4,[g])<4,
\end{align}
then $(M^4,g)$ is conformally equivalent to $(S^4,g_{S^4})$, where $g_{S^4}$ is the round metric. In fact, $(M^4,g_{Y})$ is isometric to $(S^4,g_{S^4})$ where $g_{Y}\in[g]$ is the Yamabe metric.}
This result shows a conformally invariant rigidity phenomenon for closed Bach-flat four-manifolds. The Weyl functional on closed four-manifolds is defined by
\begin{align*}
    \mathcal{W} \, : \, g \, \mapsto \int||W_{g}||^2 \, dv_{g}
\end{align*}
where $W_g$ denotes the Weyl curvature tensor of $g$ and $\| \cdot \|$ is the norm of $W_{g}$ as a section of $End(\Lambda^2(M))$. Critical points of $\mathcal{W}$ are metrics with vanishing Bach tensor $B_{\alpha \beta}$ defined by 
\begin{align} \label{BF} 
B_{\alpha\beta} = \nabla^{\gamma}\nabla^{\delta} W_{\alpha\gamma\beta\delta} + P^{\gamma\delta}W_{\alpha\gamma\beta\delta},
\end{align}
where $P$ is the Schouten tensor. Four-manifolds with vanishing Bach tensor are called {\em Bach-flat} manifolds. It is noteworthy to point out that Bach-flatness is a conformally invariant condition.

Since Bach-flat metrics are critical points of the Weyl functional, Theorem D can be restated in the following way:
\namedthm{Theorem E}
{The round sphere $(S^4,g_{S^4})$ is the unique critical point of the Weyl functional (up to conformal equivalence) satisfying
\begin{align*}
    0 \leq \beta(M^4,[g]) < 4.
\end{align*}
}
Replacing the condition on $\beta$ by the condition on $\int_{M}\sigma_2(P)\,dv_g$, Chang, Qing, and Yang proved the following gap theorem in \cite{CQY} (see also \cite{LQS17} for a simplified and refined proof):
\namedthm{Theorem F}{(\cite{CQY})
There is an $\epsilon > 0$ such that if $(M^4,g)$ is a closed, oriented Bach-flat four-manifold and $g\in\mathcal{Y}_2^+(M^4)$ satisfies
\begin{align}\label{integral sphere}
    \int\sigma_2(P)\,dv_{g} \geq 4(1-\epsilon)\pi^2,
\end{align}
then $(M^4,g)$ is conformally equivalent to $(S^4,g_{S^4})$, where $g_{S^4}$ is the round metric. In fact, $(M^4,g_{Y})$ is isometric to $(S^4,g_{S^4})$ where $g_{Y}\in[g]$ is the Yamabe metric.
}

We shall generalize Theorems D, E, and F to the setting of compact Bach-flat four-manifolds with boundary. In order to state our main results we first establish some additional notation. 
 
From now on, we assume that $(M^4,\Sigma^3 = \partial M^4,g)$ is a compact, oriented, four-dimensional Riemannian manifold with boundary. In place of $\mathcal{W}$, consider the functional
\begin{align*}
    \mathcal{W}_b : g \mapsto \int_{M^4}||W_{g}||^2\, dv_g + 2\int_{\Sigma^3}W_{i0j0}L^{ij}\, d{\sigma}_g,
\end{align*}
where Latin letters run through $1,2,3$ as tangential directions, $0$ is the outward normal direction on $\Sigma^3$ and $L$ is the second fundamental form of $\Sigma^3$. This functional generalizes the Weyl functional $\mathcal{W}$ for closed four-manifolds. As pointed out in \cite{CG18}, critical points of $\mathcal{W}_b$ are Bach-flat metrics such that the tensor
\begin{align} \label{S}
S_{ij} := \nabla^{\alpha}W_{{\alpha}i0j} + \nabla^{\alpha}W_{{\alpha}j0i} - \nabla^0W_{0i0j} + \frac{4}{3}HW_{0i0j}
\end{align}
vanishes on the boundary. In this case, we will say that the boundary is $S$-flat. We remark that $\mathcal{W}$ and $\mathcal{W}_b$ are conformally invariant, hence Bach-flatness and $S$-flatness are conformally invariant conditions.

As pointed out in \cite{GZ20}, since the Bach-flat condition is fourth order in the metric, it should be possible to specify a boundary condition in addition to $S$-flatness. We shall say the boundary is umbilic if $L_{ij} = \lambda{g_{ij}}$ for some smooth function $\lambda$ on $\Sigma^3$ and totally geodesic if $L_{ij} \equiv 0$ on $\Sigma^3$. In \cite{GZ20}, the authors proved the following result:
\namedthm{Theorem G}
{(\cite{GZ20})Given a compact four-dimensional manifold with boundary $(M^4, \Sigma^3 = \partial M^4)$, let $\mathcal{M}^{0}(M^4,\Sigma^3)$ denote the space of all Riemannian metrics on $(M^4,\Sigma^3)$ such that $\Sigma^3$ is umbilic.  Then $g$ is a critical point of 
\begin{align*}
\mathcal{W} \big|_{\mathcal{M}^{0}(M^4,\Sigma^3)},
\end{align*}
if and only if $g$ is Bach-flat and $\Sigma^3$ is $S$-flat and umbilic. 
}  
Note that umbilic condition is also conformally invariant. Hence, it is natural to consider Bach-flat four-manifolds with umbilic and $S$-flat boundary as a conformally invariant generalization of closed Bach-flat four-manifolds. 

We now state our main result which generalizes Theorem D.
\begin{theorem}\label{beta b sphere}
Let $(M^4,\Sigma^3,g)$ be a Bach-flat four-manifold with boundary such that the boundary is $S$-flat and umbilic and $g\in\mathcal{Y}_{2,b}^+(M^4,\Sigma^3)$ satisfies
\begin{align}
    0 \leq \beta_b(M^4,\Sigma^3,[g]) < 4.
\end{align}
Then $(M^4,\Sigma^3,g)$ is conformally equivalent to $(S^4_+,S^3,g_{S_{+}^4})$, where $g_{S_{+}^4}$ is the round metric on upper hemisphere.
\end{theorem}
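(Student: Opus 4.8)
The plan is to reduce Theorem \ref{beta b sphere} to the closed rigidity theorem (Theorem D) by doubling across the boundary. First I would pass to the Yamabe metric $g_Y \in [g]$ furnished by Escobar's solution of the boundary Yamabe problem: since $g \in \mathcal{Y}_{1,b}^+(M^4,\Sigma^3)$ and the boundary is umbilic, $g_Y$ has constant positive scalar curvature and \emph{totally geodesic} boundary, so $L \equiv 0$ and $H \equiv 0$ for $g_Y$. All of the hypotheses are conformally invariant---Bach-flatness, $S$-flatness, umbilicity, the sign of $Y(M^4,\Sigma^3,[g])$, and the value of $\beta_b(M^4,\Sigma^3,[g])$---so no information is lost, and it suffices to establish the conclusion for $g_Y$.

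Next I would form the double $(DM^4,\hat g)$ by gluing two copies of $(M^4,g_Y)$ along $\Sigma^3$, with the reflection $\tau$ interchanging them and fixing $\Sigma^3$. In Fermi coordinates $g_Y = dt^2 + (g_Y)_{ij}(t,x)\,dx^i dx^j$, the totally geodesic condition $\partial_t (g_Y)_{ij}|_{t=0} = -2L_{ij} = 0$ guarantees that the even extension across $t=0$ is at least $C^2$, and $DM^4$ is a closed, oriented manifold. The relevant inputs to Theorem D transfer cleanly: one has $R_{\hat g} = R_{g_Y} > 0$, so $\hat g$ has positive Yamabe type; since $L \equiv 0$ kills both the boundary term $\frac12\int_\Sigma \mathcal{B}_{g_Y}\,d\sigma$ in $\mathcal{E}([g])$ and the boundary term $2\int_\Sigma W_{i0j0}L^{ij}\,d\sigma$ in $\mathcal{W}_b$, reflection symmetry gives $\int_{DM}\sigma_2(P_{\hat g})\,dv = 2\,\mathcal{E}([g]) > 0$ and $\int_{DM}\|W_{\hat g}\|^2\,dv = 2\int_M \|W_{g_Y}\|^2\,dv$. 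Hence $\hat g \in \mathcal{Y}_2^+(DM^4)$ and $\beta(DM^4,[\hat g]) = \beta_b(M^4,\Sigma^3,[g]) < 4$; consistently, Theorem \ref{hom} (applicable since $\beta_b(M^4,\Sigma^3) \le \beta_b(M^4,\Sigma^3,[g]) < 4 < 8$) already shows $DM^4$ is homeomorphic to $S^4$.

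The crucial---and hardest---step is to show that $\hat g$ is a genuine, sufficiently regular \emph{Bach-flat} metric on $DM^4$, and this is precisely where the $S$-flat condition and the smooth expansion of the metric near the boundary enter. I would expand $g_Y$ to high order in the normal variable $t$ and verify that $S$-flatness together with total geodesy removes the potential jump across $\Sigma^3$ in the fourth-order quantity $\nabla^\gamma\nabla^\delta W_{\alpha\gamma\beta\delta} + P^{\gamma\delta}W_{\alpha\gamma\beta\delta}$, so that the doubled Euler--Lagrange equation of $\mathcal{W}$ holds weakly with no singular contribution supported on $\Sigma^3$. Indeed $S$-flatness is exactly the natural Neumann-type boundary condition making the conformally invariant fourth-order system extend across the gluing locus, and elliptic regularity for this system then upgrades $\hat g$ to a smooth Bach-flat metric. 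I expect the bookkeeping of normal derivatives---confirming that the mean-curvature and $S$-tensor conditions suffice to cancel every odd-order obstruction through the order governing the Bach operator---to be the main technical obstacle, and the place where "understanding the expansion of the metric near the boundary" is genuinely needed.

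Finally, I would apply Theorem D to $(DM^4,\hat g)$ to conclude it is conformally equivalent to $(S^4,g_{S^4})$ by some diffeomorphism $\Phi$. The reflection $\tau$ transports to a conformal involution $\hat\tau = \Phi\tau\Phi^{-1}$ of $S^4$ whose fixed-point set $\Phi(\Sigma^3)$ is a three-sphere. Since any finite group of conformal diffeomorphisms of $S^4$ lies in a maximal compact subgroup of the conformal group and is therefore conjugate to a group of isometries, after modifying $\Phi$ by a Möbius transformation I may assume $\hat\tau$ is an isometric involution of $(S^4,g_{S^4})$; an isometric involution with three-dimensional fixed-point set must be the reflection across a totally geodesic equatorial $S^3$, splitting $S^4$ into two round hemispheres interchanged by $\hat\tau$. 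Restricting $\Phi$ to one copy of $M^4$ then yields the desired conformal equivalence of $(M^4,\Sigma^3,g)$ with $(S^4_+,S^3,g_{S_{+}^4})$.
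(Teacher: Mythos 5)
Your proposal is correct and follows essentially the same route as the paper: pass to the Yamabe metric (constant scalar curvature, totally geodesic boundary), double across $\Sigma^3$, use $S$-flatness to remove the odd-order jump so the doubled metric is $C^{4,\alpha}$ and Bach-flat (hence smooth by elliptic regularity for the fourth-order system in harmonic coordinates), and apply the closed rigidity theorem. The ``bookkeeping'' you defer is exactly the paper's key computation that for a constant-scalar-curvature metric with totally geodesic boundary one has $h'''_{ij} = -4S_{ij}$ on $\Sigma^3$, so $S$-flatness kills the third-order normal derivative; your final involution argument is a more detailed version of the paper's terse concluding step.
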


If we replace the condition on $\beta_b$ by the condition on the conformal invariant $\mathcal{E}([g])$, we establish the following generalization of Theorem F.

\begin{theorem}\label{conformal E sphere}
There is an $\epsilon_1 > 0$ such that if $(M^4,\Sigma^3,g)$ is a Bach-flat four-manifold with boundary such that the boundary is $S$-flat and umbilic and $g\in\mathcal{Y}_2^+(M^4,\Sigma^3)$ satisfies
\begin{align}\label{integral sphere boundary}
    \mathcal{E}([g]) \geq 2(1-\epsilon_1)\pi^2,
\end{align}
then $(M^4,\Sigma^3,g)$ is conformally equivalent to $(S^4_+,S^3,g_{S^4_+})$, where $g_{S_{+}^4}$ is the round metric on upper hemisphere.
\end{theorem}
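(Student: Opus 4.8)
The plan is to deduce Theorem \ref{conformal E sphere} from Theorem \ref{beta b sphere} by converting the integral gap hypothesis on $\mathcal{E}([g])$ into the pinching bound $\beta_b<4$; the mechanism for this conversion is to pin down the relative Euler characteristic. Using the Chern--Gauss--Bonnet formula (\ref{CGB}) to eliminate $\int_M\|W_g\|^2\,dv_g$, one has
\[
\beta_b(M^4,\Sigma^3,[g])=\frac{8\pi^2\chi(M^4,\Sigma^3)}{\mathcal{E}([g])}-4 .
\]
Since $\int_M\|W_g\|^2\,dv_g\ge 0$, the hypothesis $\mathcal{E}([g])\ge 2(1-\epsilon_1)\pi^2$ gives $8\pi^2\chi(M^4,\Sigma^3)\ge 4\mathcal{E}([g])\ge 8(1-\epsilon_1)\pi^2$, so $\chi(M^4,\Sigma^3)\ge 1$ once $\epsilon_1<1$. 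The entire difficulty is therefore the reverse inequality $\chi(M^4,\Sigma^3)\le 1$.

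To obtain it I would pass to the double. First normalize $g$ to its Escobar--Yamabe representative $g_Y\in[g]$, which by the umbilic hypothesis has constant scalar curvature and totally geodesic boundary; the conditions Bach-flat, $S$-flat, and umbilic are conformally invariant and so persist for $g_Y$. Reflecting across the totally geodesic boundary produces the closed manifold $(DM^4,Dg_Y)$ with $\chi(DM^4)=2\chi(M^4,\Sigma^3)$ (using $\chi(\Sigma^3)=0$). Because the reflection is an isometry, $\int_{DM}\|W\|^2\,dv=2\int_M\|W_g\|^2\,dv_g$, and since $g_Y$ has totally geodesic boundary so that $\mathcal{E}([g])=\int_M\sigma_2(P_{g_Y})\,dv_{g_Y}$, also $\int_{DM}\sigma_2(P)\,dv=2\mathcal{E}([g])$. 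In particular $Dg_Y$ has positive scalar curvature and $\int_{DM}\sigma_2(P)\,dv>0$, whence $Dg_Y\in\mathcal{Y}_2^+(DM^4)$, and $\int_{DM}\sigma_2(P)\,dv\ge 4(1-\epsilon_1)\pi^2$ is precisely the closed gap hypothesis of Theorem F, provided we take $\epsilon_1\le\epsilon$ for the constant $\epsilon$ furnished by that theorem.

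The decisive point---and the step I expect to be the main obstacle---is to show that $Dg_Y$ is regular enough across $\Sigma^3$ to be a genuine Bach-flat metric, so that Theorem F applies. This is exactly where $S$-flatness and umbilicity enter: $S$-flatness is the fourth-order reflection condition that annihilates the singular part that the Bach tensor would otherwise carry along $\Sigma^3$, while the expansion of a smooth metric near the boundary (developed in the earlier sections) supplies the matching of normal jets needed to promote the weakly Bach-flat double to a smooth one. Granting this regularity, Theorem F forces $(DM^4,Dg_Y)$ to be conformally $(S^4,g_{S^4})$, whence $\chi(DM^4)=2$ and therefore $\chi(M^4,\Sigma^3)=1$.

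With $\chi(M^4,\Sigma^3)=1$ the displayed identity gives, taking in addition $\epsilon_1<\tfrac12$ (so the admissible constant is $\epsilon_1=\min\{\epsilon,\tfrac12\}$, shrunk if necessary to be strict),
\[
0\le \beta_b(M^4,\Sigma^3,[g])=\frac{8\pi^2}{\mathcal{E}([g])}-4\le\frac{4}{1-\epsilon_1}-4=\frac{4\epsilon_1}{1-\epsilon_1}<4 ,
\]
so Theorem \ref{beta b sphere} applies directly on $(M^4,\Sigma^3,g)$ and identifies it, up to conformal equivalence, with the round upper hemisphere $(S^4_+,S^3,g_{S^4_+})$. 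One may equivalently read the conclusion off the double, since $\beta_b$ is preserved under doubling and Theorem F already realizes $(DM^4,Dg_Y)$ isometrically as $(S^4,g_{S^4})$, whose reflection fixes an equatorial $S^3$ bounding two round hemispheres. Finally, if one prefers to bypass the regularity of the double, the identity $\chi(M^4,\Sigma^3)=1$ could instead be established directly on $M^4$ by a Gursky-type lower estimate for the self-dual and anti-self-dual Weyl energies in the presence of a nonvanishing intersection form, paralleling the closed proof of Theorem F; this is the natural, though more laborious, substitute for the doubling step.
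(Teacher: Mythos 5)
Your proposal is correct and its engine is the same as the paper's: pass to the Escobar--Yamabe representative with totally geodesic boundary, use umbilicity, $S$-flatness and Bach-flatness to make the doubled metric smooth, and apply the closed gap theorem (Theorem F) to the double, whose $\sigma_2$-integral equals $2\mathcal{E}([g])\ge 4(1-\epsilon_1)\pi^2$. The only difference is that the paper stops there and reads the hemisphere conclusion directly off the identification of the double with the round $S^4$, whereas your extra loop---using that identification only to pin down $\chi(M^4,\Sigma^3)=1$, deducing $\beta_b(M^4,\Sigma^3,[g])<4$, and then invoking Theorem \ref{beta b sphere}---is logically sound but redundant, as you yourself observe.
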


If we replace the condition on $\beta_b$ by the condition on the functional $\mathcal{W}_b$, we establish the following conformally invariant rigidity theorem.

\begin{theorem}\label{beta b and Weyl sphere}
Let $(M^4,\Sigma^3,g)$ be a Bach-flat four-manifold with boundary such that the boundary is $S$-flat and umbilic and $g\in\mathcal{Y}_{2,b}^+(M^4,\Sigma^3)$ satisfies
\begin{align}
    \mathcal{W}_{b}([g]) < 4\pi^2.
\end{align}
Then $(M^4,\Sigma^3,g)$ is conformally equivalent to $(S^4_+,S^3,g_{S_{+}^4})$, where $g_{S_{+}^4}$ is the round metric on upper hemisphere.
\end{theorem}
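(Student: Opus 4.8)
The plan is to reduce Theorem~\ref{beta b and Weyl sphere} to Theorem~\ref{beta b sphere} by showing that, under the umbilic hypothesis, the condition $\mathcal{W}_b([g]) < 4\pi^2$ forces $0 \le \beta_b(M^4,\Sigma^3,[g]) < 4$.

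First I would exploit umbilicity to simplify the boundary term of $\mathcal{W}_b$. Writing $L_{ij} = \lambda g_{ij}$, the boundary integrand becomes $W_{i0j0}L^{ij} = \lambda\, g^{ij}W_{i0j0}$. In an adapted orthonormal frame the totally trace-free property of the Weyl tensor gives $\sum_{\alpha} W_{\alpha 0 \alpha 0} = 0$, and since $W_{0000} = 0$ this yields $g^{ij}W_{i0j0} = \sum_{i=1}^3 W_{i0i0} = 0$ pointwise on $\Sigma^3$. Hence the boundary term vanishes identically and
\begin{align*}
\mathcal{W}_b([g]) = \int_{M}\|W_g\|^2\,dv_g .
\end{align*}
This is consistent with the model: on the hemisphere both the Weyl tensor and the correction term vanish, so $\mathcal{W}_b = 0 < 4\pi^2$.

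Next I would invoke the Chern--Gauss--Bonnet formula (\ref{CGB}), which now reads $8\pi^2\chi(M^4,\Sigma^3) = \mathcal{W}_b([g]) + 4\,\mathcal{E}([g])$. Since $g \in \mathcal{Y}_{2,b}^+(M^4,\Sigma^3)$ we have $\mathcal{E}([g]) > 0$, and as $\|W_g\|^2 \ge 0$ the left-hand side is strictly positive; because $\chi(M^4,\Sigma^3)$ is an integer this forces $\chi(M^4,\Sigma^3) \ge 1$. Solving for $\mathcal{E}([g])$ and substituting into the definition of $\beta_b$ gives
\begin{align*}
\beta_b(M^4,\Sigma^3,[g]) = \frac{\mathcal{W}_b([g])}{\mathcal{E}([g])} = \frac{4\,\mathcal{W}_b([g])}{8\pi^2\chi(M^4,\Sigma^3) - \mathcal{W}_b([g])} .
\end{align*}
Since the denominator equals $4\,\mathcal{E}([g]) > 0$, a direct manipulation shows that $\beta_b < 4$ is equivalent to $\mathcal{W}_b([g]) < 4\pi^2\chi(M^4,\Sigma^3)$. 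The hypothesis $\mathcal{W}_b([g]) < 4\pi^2$ together with $\chi(M^4,\Sigma^3) \ge 1$ yields exactly this, while $\beta_b \ge 0$ holds by construction. Thus $0 \le \beta_b(M^4,\Sigma^3,[g]) < 4$, and Theorem~\ref{beta b sphere} applies to give the conformal equivalence with $(S^4_+, S^3, g_{S^4_+})$.

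Because the genuine analytic content is packaged in Theorem~\ref{beta b sphere}, I do not expect a serious obstacle here; the only delicate point is the first step, namely verifying that umbilicity collapses $\mathcal{W}_b$ to the pure Weyl integral. One should check that the conformal invariance of $\mathcal{W}_b$ and of the umbilic condition permits this computation in any representative of $[g]$, and that the trace identity is applied in a correctly normalized frame along $\Sigma^3$; the remaining steps are elementary once $\chi(M^4,\Sigma^3) \ge 1$ is secured from the strict positivity of $\mathcal{E}([g])$.
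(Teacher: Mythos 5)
Your proposal is correct, and it reaches the reduction to Theorem \ref{beta b sphere} by a genuinely different and more elementary route than the paper. The paper passes to the double manifold $(N,g_d)$ built from the Yamabe representative with totally geodesic boundary, invokes Gursky's vanishing theorem to get $b_1(N)=0$, uses the closed Chern--Gauss--Bonnet formula together with $\int_N\|W_{g_d}\|^2\,dv_{g_d}<8\pi^2$ and Gursky's sharp bound $\int_N\sigma_2(P_{g_d})\,dv_{g_d}\leq 4\pi^2$ (Lemma \ref{Gursky 97}) to pin down $\chi(N)=2$, and only then deduces $\mathcal{E}([g])=\tfrac12\int_N\sigma_2(P_{g_d})\,dv_{g_d}>\pi^2$ and hence $\beta_b<4$. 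You instead stay on $(M^4,\Sigma^3)$: umbilicity kills the boundary correction in $\mathcal{W}_b$ (a fact the paper itself records in Section \ref{prelim}, so your trace computation is sound), and then the boundary Chern--Gauss--Bonnet formula (\ref{CGB}) plus the integrality of $\chi(M^4,\Sigma^3)$ and the strict positivity of $\mathcal{E}([g])$ give $\chi\geq 1$, whence $4\mathcal{E}([g])=8\pi^2\chi-\mathcal{W}_b([g])>8\pi^2-4\pi^2$ and $\beta_b=\mathcal{W}_b/\mathcal{E}<4$. This bypasses the doubling construction, the regularity lemma, and both of Gursky's theorems for the reduction step (they are of course still consumed inside Theorem \ref{beta b sphere}); what the paper's longer route buys is the extra topological byproduct $b_2(N)=0$ and uniformity with the other proofs in that section, but neither is needed for the stated conclusion. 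The only point to flag explicitly is that all quantities in your identity ($\mathcal{W}_b$, $\mathcal{E}$, $\chi$) are conformally invariant, so the computation is representative-independent, as you note.
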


As pointed out by Theorem 1.2, for four-manifolds with umbilic boundary, the ``critical'' value for $\beta_b$ is actually $8$. With additional Bach-flatness and $S$-flatness, we prove the following theorem:
\begin{theorem}\label{beta b 8}
Let $(M^4,\Sigma^3,g)$ be a Bach-flat four-manifold with boundary such that the boundary is $S$-flat and umbilic and $g\in\mathcal{Y}_{2,b}^+(M^4,\Sigma^3)$. There is an $\epsilon_2 >0$ such that if $g$ satisfies
\begin{align}
    0 \leq \beta_b(M^4,\Sigma^3,[g]) < 8(1+\epsilon_2),
\end{align}
then the double of $(M^4,\Sigma^3)$ is homeomorphic to $S^4$, $\Sigma^3$ is a homology $S^3$, and $M^4$ is a homology $B^4$.
\end{theorem}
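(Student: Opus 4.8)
The plan is to reduce the statement to a closed gap theorem on the double and then combine a sharp Gursky-type curvature estimate with a compactness argument for Bach-flat metrics. First I would replace $g$ by the Yamabe representative $g_Y \in \mathcal{Y}_{2,b}^+(M^4,\Sigma^3)$, which by Escobar's theorem has constant scalar curvature and minimal boundary; since $\Sigma^3$ is umbilic, the boundary is then totally geodesic. Doubling $(M^4,\Sigma^3,g_Y)$ across $\Sigma^3$ produces a closed, oriented four-manifold $(DM^4,\bar g)$ carrying an orientation-reversing isometric reflection. Because the boundary is $S$-flat and totally geodesic, the reflection principle (using the boundary expansion of the metric) makes $\bar g$ Bach-flat on $DM^4$; by symmetry $\tau(DM^4)=0$, $\int_{DM}\|W_{\bar g}\|^2\,dv = 2\int_M\|W_g\|^2\,dv_g$ and $\int_{DM}\sigma_2(P_{\bar g})\,dv = 2\mathcal{E}([g])$, so $\beta(DM^4,[\bar g]) = \beta_b(M^4,\Sigma^3,[g])$ while $\bar g$ has positive Yamabe invariant and $\int_{DM}\sigma_2 > 0$. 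It therefore suffices to treat the closed manifold $(DM^4,\bar g)$.

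Second, I would extract the threshold $8$ from a sharp self-dual estimate. Positive Yamabe invariant and $\int\sigma_2>0$ give $b_1(DM^4)=0$ by Theorem A, so $\chi(DM^4)=2+b_2^++b_2^-$. If $b_2^+(DM^4)>0$, Gursky's inequality $\int\|W^+\|^2 \geq \frac{4\pi^2}{3}(2\chi+3\tau)$ \cite{Gur98} applies; the orientation-reversing isometry forces $\int\|W^+\|^2=\int\|W^-\|^2$ and $\tau=0$, hence $\int\|W\|^2\geq\frac{16\pi^2}{3}\chi$. Substituting into the Chern--Gauss--Bonnet relation $\int\sigma_2=2\pi^2\chi-\frac14\int\|W\|^2$ yields $\beta(DM^4,[\bar g])\geq 8$, with equality precisely when Gursky's inequality is saturated. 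In particular $\beta_b<8$ forces $b_2^+=b_2^-=0$, so $DM^4$ is a homology four-sphere; this recovers Theorem \ref{hom}, and the homological statements for $\Sigma^3$ and $M^4$ follow from the Mayer--Vietoris and relative sequences of $DM^4=M^4\cup_{\Sigma^3}M^4$.

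The crux, and what genuinely uses Bach-flatness, is to upgrade $8$ to $8(1+\epsilon_2)$ by producing a definite gap above the threshold. I would show there is $\epsilon_2>0$ such that a Bach-flat double with $b_2^+>0$ and $\beta<8(1+\epsilon_2)$ must saturate Gursky's inequality, i.e. $\beta=8$, and be the standard product of two round two-spheres of equal radius, the unique model realizing equality when $\tau=0$ (one checks $\beta(S^2\times S^2)=8$). Following the scheme used for Theorem F in \cite{CQY,LQS17}, I would argue by contradiction: a sequence of Bach-flat doubles with $b_2^+>0$, $\tau=0$ and $\beta_i\to 8^+$, none equal to $S^2\times S^2$, would after Yamabe normalization admit an $\epsilon$-regularity and concentration-compactness analysis for the fourth-order Bach-flat system, converging modulo finitely many bubbles to a Bach-flat limit saturating Gursky's inequality with $\tau=0$. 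Persistence of $b_2^+>0$ together with the reflection symmetry, which makes the self-dual and anti-self-dual harmonic forms simultaneously parallel, identifies the limit as $S^2\times S^2$, and the rigidity and isolation of this Einstein model in the Bach-flat moduli contradict $\beta_i>8$. I expect the main obstacle to be exactly this degeneration analysis: arranging the bubbling to respect the reflection so that no net signature is lost, ruling out the anti-self-dual ALE (scalar-flat Kähler) bubbles that can occur only when $\tau\neq 0$, and establishing the rigidity of $S^2\times S^2$. Granting the gap, under $\beta_b<8(1+\epsilon_2)$ the only configuration with $b_2^+>0$ is the extremal $S^2\times S^2$, while $b_2^+=0$ makes $DM^4$ a homology four-sphere; simple connectivity, obtained as in Theorem \ref{Connect}, together with Freedman's theorem then gives $DM^4\cong S^4$ and the asserted conclusions for $\Sigma^3$ and $M^4$.
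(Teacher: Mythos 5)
Your overall skeleton matches the paper's: pass to the Yamabe representative, observe the boundary becomes totally geodesic, double across $\Sigma^3$ to get a smooth closed Bach-flat manifold, and reduce to a gap statement at $\beta=8$ on the closed double, with $S^2\times S^2$ as the borderline model. (Your observation that the reflection forces $\tau=0$, so $b_2^+=b_2^-$ and Gursky's $W^+$-inequality gives $\beta\geq 8$ whenever $b_2^+>0$, is a clean alternative to the paper's combination of Lemma 2.5 of \cite{CGZ}, the parity lemma for $b_2$ of a double, and the explicit signature-formula elimination of the case $b_2^+=2$, $b_2^-=0$.)

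However, there is a genuine gap at the end. Your argument concludes that under $0\leq\beta_b<8(1+\epsilon_2)$ either $b_2(N)=0$ or $(N,g_d)$ is isometric to $(S^2\times S^2, g_{S^2\times S^2})$, and you then pass directly to Freedman's theorem. But the theorem asserts the double is homeomorphic to $S^4$, so the $S^2\times S^2$ configuration must be \emph{excluded}, not merely identified; if the double were $S^2\times S^2$ the conclusion would be false. The paper closes this case by noting that $\Sigma^3$ sits inside the double as a totally geodesic hypersurface of $(S^2\times S^2, g_{S^2\times S^2})$, and by the Chen--Nagano classification (\cite{ChenNagano}, Corollary 3.1) the product of round two-spheres admits no totally geodesic hypersurface --- a contradiction. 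Without this (or some substitute) your proof does not yield the stated conclusion. Separately, you propose to reprove the $S^2\times S^2$ characterization at $\beta$ near $8$ via a concentration-compactness and bubbling analysis for the Bach-flat system; the paper simply invokes this as a known result (Lemma \ref{Z}, from \cite{Z18}), and you correctly flag that carrying out the degeneration analysis from scratch is a substantial undertaking that your sketch does not complete.
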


Next we turn to the applications of Theorems \ref{beta b sphere}, \ref{conformal E sphere}, and \ref{beta b and Weyl sphere} to conformally compact Einstein four-manifolds. We first recall the boundary expansion for a conformally compact Einstein four-manifold. Let $(M^4,\Sigma^3,g)$ be the compactification of a conformally compact Einstein four-manifold $(M^4,g_+)$ with geodesic defining function $r$ and $(\Sigma^3,[h])$ as the conformal infinity, where $h = g|_{\Sigma}$. Given $h$ in the conformal infinity we may choose a \emph{geodesic} defining function $r$, such that near the boundary $\bar{g} = r^2g_{+}$ can be written as
\begin{align}\label{PE expansion}
    \bar{g} = dr^2 + h + g^{(2)}r^2 + g^{(3)}r^3 + O(r^4),
\end{align}
where $g^{(2)}$ and $g^{(3)}$ are tensors on $\Sigma^3$. It is noteworthy to point out that (\ref{PE expansion}) shows that any compactification of a conformally compact Einstein four-manifold has umbilic boundary. The item $g^{(3)}$ is not determined by the intrinsic geometry of $(\Sigma^3,h)$ and is referred to as the non-local term.  It turns out that the $S$-tensor can be understood as (up to a constant multiple) the non-local term $g^{(3)}$ in (\ref{PE expansion}) as observed in \cite{CG18}:
\begin{align}\label{PE S tensor}
    S_{ij} = -\frac{3}{2}g^{(3)}_{ij}.
\end{align}
With these preliminaries, we now state several applications of our main theorems to conformally compact Einstein four-manifolds. It is noteworthy to point out that for any compactification $(M^4,\Sigma^3,g)$ of conformally compact Einstein four-manifolds $Y(\Sigma^3,[h])>0$ implies $Y(M^4,\Sigma^3,[g]) > 0$. See Lemma \ref{Qing} in Section \ref{prelim} for more details.

\begin{corollary}\label{PE hyper}
Let  $(M^4,g_{+})$  be a conformally compact Einstein four-manifold with conformal infinity of positive Yamabe type. Then  $(M^4,g_{+})$  is isometric to the four-dimensional  hyperbolic space if it satisfies 
\begin{align}\label{Cor E}
    \int_{M}||W_{g_{+}}||^2\,dv_{g_{+}} < 6V(M^4,g_{+})
\end{align}
and the non-local term $g^{(3)}$ in (\ref{PE expansion}) vanishes.
\end{corollary}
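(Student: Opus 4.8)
The plan is to reduce the corollary to Theorem \ref{beta b sphere} applied to a suitable compactification, and then to upgrade the resulting conformal statement to an isometry using the conformal flatness of the round hemisphere. Fix the geodesic compactification $\bar{g} = r^2 g_{+}$ with conformal infinity $(\Sigma^3,[h])$ as in (\ref{PE expansion}). Three structural facts come essentially for free. First, the expansion (\ref{PE expansion}) shows the boundary is umbilic. Second, since $\bar{g}$ is conformal to the Einstein metric $g_{+}$, Einstein metrics are Bach-flat, and Bach-flatness is a conformally invariant condition in four dimensions, the compactification $(M^4,\Sigma^3,\bar{g})$ is Bach-flat. Third, the hypothesis that the non-local term vanishes, $g^{(3)} = 0$, gives via (\ref{PE S tensor}) that $S_{ij} = -\tfrac{3}{2}g^{(3)}_{ij} = 0$, so the boundary is $S$-flat.

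Next I would translate the curvature inequality (\ref{Cor E}) into the pinching hypothesis of Theorem \ref{beta b sphere}. The positive Yamabe type of the conformal infinity gives $Y(M^4,\Sigma^3,[\bar g]) > 0$ by Lemma \ref{Qing}. Because $\|W\|^2\,dv$ is a pointwise conformal invariant in four dimensions, $\int_M \|W_{g_{+}}\|^2\,dv_{g_{+}} = \int_M \|W_{\bar g}\|^2\,dv_{\bar g}$, and using $\mathcal{E}([\bar g]) = \tfrac{3}{2}V$ the hypothesis (\ref{Cor E}) reads $\int_M \|W_{\bar g}\|^2\,dv_{\bar g} < 6V = 4\,\mathcal{E}([\bar g])$. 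In particular $\mathcal{E}([\bar g]) > 0$, so $\bar g \in \mathcal{Y}_{2,b}^{+}(M^4,\Sigma^3)$, and dividing gives $0 \leq \beta_b(M^4,\Sigma^3,[\bar g]) = \int_M \|W_{\bar g}\|^2\,dv_{\bar g} \,/\, \mathcal{E}([\bar g]) < 4$. All hypotheses of Theorem \ref{beta b sphere} are thereby verified, so $(M^4,\Sigma^3,\bar g)$ is conformally equivalent to the round upper hemisphere $(S^4_+,S^3,g_{S^4_+})$.

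Finally I would deduce the isometry. The round hemisphere is conformally flat, so $W_{g_{S^4_+}} \equiv 0$; since the $(1,3)$ Weyl tensor is a pointwise conformal invariant and the conformal equivalence above identifies $\bar g$ with a conformal multiple of $g_{S^4_+}$, we get $W_{\bar g} \equiv 0$, and hence $W_{g_{+}} \equiv 0$ on the interior, as $g_{+} = r^{-2}\bar g$ lies in $[\bar g]$ there. An Einstein metric with vanishing Weyl tensor has constant sectional curvature, and with the normalization $\mathrm{Ric}_{g_{+}} = -3\,g_{+}$ this curvature equals $-1$. The interior of $M^4$ is diffeomorphic to $\mathbb{R}^4$ (the compactification being a hemisphere), hence simply connected, and $g_{+}$ is complete, so by the classification of space forms $(M^4,g_{+})$ is isometric to $\mathbb{H}^4$.

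The main obstacle I expect is the bookkeeping in the reduction step, namely making precise both the equivalence of $g^{(3)} = 0$ with $S$-flatness via (\ref{PE S tensor}) and the conversion, using conformal invariance of $\|W\|^2\,dv$ together with $\mathcal{E}([\bar g]) = \tfrac{3}{2}V$, of the volume-comparison (\ref{Cor E}) into the pinching $\beta_b < 4$ (including the verification $V > 0$ needed for $\bar g \in \mathcal{Y}_{2,b}^{+}$). The concluding space-form argument is standard once $W_{g_{+}} \equiv 0$ is established.
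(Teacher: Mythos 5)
Your proposal is correct and follows exactly the route the paper intends: the corollary is the direct application of Theorem \ref{beta b sphere} to the geodesic compactification, using the umbilicity from (\ref{PE expansion}), $S$-flatness from (\ref{PE S tensor}) with $g^{(3)}=0$, positivity of $Y(M^4,\Sigma^3,[\bar g])$ from Lemma \ref{Qing}, and the identity $\mathcal{E}([\bar g])=\tfrac{3}{2}V$ to convert (\ref{Cor E}) into $\beta_b<4$. The concluding step (conformal equivalence to the hemisphere forces $W_{g_+}\equiv 0$, hence constant curvature $-1$ and isometry with $\mathbb{H}^4$) is the standard upgrade the paper leaves implicit.
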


\begin{remark}
Note that (\ref{Cor E}) is equivalent to 
\begin{align}
    V(M^4,g_{+}) > \frac{2\pi^2}{3}\chi(M^4)
\end{align}
by (\ref{CGB And}). Compare with Theorem C.
\end{remark}

\begin{corollary}\label{LQS}
There is an $\epsilon > 0$ such that a conformally compact Einstein four-manifold $(M^4,g_{+})$ with conformal infinity of positive Yamabe type is isometric to the hyperbolic space if its renormalized volume satisfies 
\begin{align}
    V(M^4,g_{+}) \geq (1-\epsilon)\frac{4\pi^2}{3} = (1-\epsilon)V(\mathbb{H}^4,g_{\mathbb{H}^4})
\end{align}
and the non-local term $g^{(3)}$ in (\ref{PE expansion}) vanishes, where $g_{\mathbb{H}^4}$ is the hyperbolic metric.
\end{corollary}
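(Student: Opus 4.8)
The plan is to obtain Corollary \ref{LQS} as an application of Theorem \ref{conformal E sphere}, followed by a short curvature argument that promotes the resulting conformal equivalence to an isometry. First I would pass to a compactification: fixing a representative $h$ of the conformal infinity and the associated geodesic defining function $r$, set $g := r^2 g_+$. By the expansion (\ref{PE expansion}) this compactified metric $(M^4,\Sigma^3,g)$ automatically has umbilic boundary. Since $g_+$ is Einstein it is Bach-flat, and as Bach-flatness is conformally invariant the representative $g \in [g_+]$ is Bach-flat as well. The assumption that the non-local term $g^{(3)}$ vanishes, combined with (\ref{PE S tensor}), yields $S_{ij} = -\tfrac{3}{2}g^{(3)}_{ij} \equiv 0$, so the boundary is $S$-flat. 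Thus $g$ belongs to precisely the class of metrics to which Theorem \ref{conformal E sphere} applies.

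Next I would verify the positivity and size hypotheses. Because the conformal infinity is of positive Yamabe type, Lemma \ref{Qing} gives $Y(M^4,\Sigma^3,[g]) > 0$. Using the identity $\mathcal{E}([g]) = \tfrac{3}{2}V$ together with $V \geq (1-\epsilon)\tfrac{4\pi^2}{3} > 0$ (assuming $\epsilon < 1$), we get $\mathcal{E}([g]) > 0$, so $g \in \mathcal{Y}_{2,b}^+(M^4,\Sigma^3)$; the same identity converts the volume lower bound into $\mathcal{E}([g]) = \tfrac{3}{2}V \geq 2(1-\epsilon)\pi^2$. Choosing $\epsilon := \epsilon_1$ to be the constant supplied by Theorem \ref{conformal E sphere}, the size condition (\ref{integral sphere boundary}) is met and the theorem yields that $(M^4,\Sigma^3,g)$ is conformally equivalent to the round upper hemisphere $(S^4_+,S^3,g_{S^4_+})$.

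It then remains to deduce the isometry $(M^4,g_+) \cong \mathbb{H}^4$. The observation I would exploit is that the round hemisphere $(S^4_+,g_{S^4_+})$ is locally conformally flat, so its Weyl tensor vanishes identically; since the Weyl tensor is a pointwise conformal invariant in dimension four, it follows that $W_g \equiv 0$, and because $g_+ \in [g]$ we also obtain $W_{g_+} \equiv 0$. Hence $g_+$ is a conformally flat Einstein metric, which forces $g_+$ to have constant sectional curvature; with the normalization of a conformally compact Einstein metric this curvature equals $-1$. Finally $M^4$ is the interior of the closed ball $S^4_+$ and is therefore simply connected, so the complete, simply connected space form $(M^4,g_+)$ must be isometric to $\mathbb{H}^4$.

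The entire argument is short once Theorem \ref{conformal E sphere} is in hand; the only step requiring genuine care is the last, the passage from a conformal equivalence of compactifications to an isometry of the complete Einstein metrics. I would route this through the rigidity of conformally flat Einstein metrics as above, which keeps the corollary self-contained and avoids invoking any deep uniqueness theorem for conformally compact Einstein fillings of the round sphere. Consequently the real weight of the result lies in Theorem \ref{conformal E sphere}, not in this corollary.
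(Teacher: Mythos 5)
Your proposal is correct and follows the route the paper intends: verify the hypotheses of Theorem \ref{conformal E sphere} via the umbilicity from (\ref{PE expansion}), Bach-flatness of the Einstein metric, $S$-flatness from (\ref{PE S tensor}), positivity from Lemma \ref{Qing}, and the identity $\mathcal{E}([g])=\tfrac{3}{2}V$, then upgrade the conformal equivalence with $(S^4_+,S^3,g_{S^4_+})$ to an isometry with $\mathbb{H}^4$ using $W\equiv 0$ and the rigidity of conformally flat Einstein metrics. This matches the paper's derivation of the corollary from Theorem \ref{conformal E sphere}.
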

\begin{remark}
Corollary \ref{LQS} recovers the rigidity theorem proved by G. Li, J. Qing, and Y. Shi in \cite{LQS17}.
\end{remark}

\begin{corollary}\label{PE Weyl}
Let  $(M^4,g_{+})$  be a conformally compact Einstein four-manifold with conformal infinity of positive Yamabe type. Then  $(M^4,g_{+})$  is isometric to the four-dimensional hyperbolic space if it satisfies 
\begin{align}\label{Cor F}
    V(M^4,g_{+}) > 0, \,\,\,\,\,\, \int_{M}||W_{g_{+}}||^2dv_{g_{+}} < 4\pi^2,
\end{align}
and the non-local term $g^{(3)}$ in (\ref{PE expansion}) vanishes.
\end{corollary}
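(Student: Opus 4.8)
\section*{Proof proposal for Corollary \ref{PE Weyl}}

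The plan is to realize $(M^4,g_+)$ through a suitable compactification satisfying all the hypotheses of Theorem \ref{beta b and Weyl sphere}, and then to upgrade the resulting conformal equivalence to a genuine isometry. First I would fix a representative $h$ of the conformal infinity and pass to the compactification $(M^4,\Sigma^3,g)$ with $g = r^2 g_+$ for the geodesic defining function $r$, so that near $\Sigma^3$ the metric has the expansion (\ref{PE expansion}). Three of the four structural hypotheses are then immediate. Since $g_+$ is Einstein it is Bach-flat, and Bach-flatness is conformally invariant, so $g$ is Bach-flat. The absence of a linear term in $r$ in (\ref{PE expansion}) forces the second fundamental form to vanish, $L_{ij}\equiv 0$, so $\Sigma^3$ is totally geodesic and in particular umbilic. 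Finally, the hypothesis that the non-local term vanishes, $g^{(3)}\equiv 0$, combined with (\ref{PE S tensor}), gives $S_{ij} = -\frac{3}{2}g^{(3)}_{ij} = 0$, so $\Sigma^3$ is $S$-flat.

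Next I would check the two conformally invariant hypotheses. Membership $g\in\mathcal{Y}_{2,b}^+(M^4,\Sigma^3)$ requires $Y(M^4,\Sigma^3,[g])>0$ and $\mathcal{E}([g])>0$; the former follows from the positive Yamabe type of the conformal infinity via Lemma \ref{Qing}, and the latter from the identity $\mathcal{E}([g]) = \frac{3}{2}V$ together with the hypothesis $V>0$ in (\ref{Cor F}). For the Weyl-functional bound I would use that on an umbilic boundary the boundary integrand $W_{i0j0}L^{ij}$ is proportional to the trace $\sum_i W_{i0i0}$, which vanishes because $W$ is totally trace-free; hence $\mathcal{W}_b([g]) = \int_M \|W_g\|^2\,dv_g$. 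Since $\|W\|^2\,dv$ is a pointwise conformal invariant in dimension four, we get $\int_M \|W_g\|^2\,dv_g = \int_M \|W_{g_+}\|^2\,dv_{g_+} < 4\pi^2$ by hypothesis. Applying Theorem \ref{beta b and Weyl sphere}, the compactification $(M^4,\Sigma^3,g)$ is conformally equivalent to the round upper hemisphere $(S^4_+,S^3,g_{S^4_+})$.

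It remains to convert this conformal equivalence into the asserted isometry, which I expect to be the delicate step. The key observation is that the round hemisphere is locally conformally flat, so $W_{g_{S^4_+}}\equiv 0$; since vanishing of the Weyl tensor is conformally invariant, $W_g\equiv 0$ and therefore $W_{g_+}\equiv 0$ on the interior. An Einstein metric with vanishing Weyl tensor has constant sectional curvature, and the normalization $\mathrm{Ric}(g_+) = -3g_+$ pins this curvature to $-1$. The conformal equivalence identifies $M^4$ with the closed ball, so its interior is simply connected; being complete, simply connected, and of constant sectional curvature $-1$, $(M^4,g_+)$ is isometric to $\mathbb{H}^4$ by the classification of space forms. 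The main obstacle is precisely this last upgrade: a priori Theorem \ref{beta b and Weyl sphere} yields only a conformal statement about the compactification, and the argument hinges on exploiting the local conformal flatness of the model hemisphere to force the complete Einstein metric to be genuinely hyperbolic rather than merely conformally so.
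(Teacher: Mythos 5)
Your proposal is correct and follows exactly the route the paper intends: the paper does not write out a separate proof of Corollary \ref{PE Weyl}, but it assembles precisely the ingredients you use (umbilicity/total geodesy from the expansion (\ref{PE expansion}), $S$-flatness from $S_{ij}=-\tfrac{3}{2}g^{(3)}_{ij}$, Lemma \ref{Qing} for the Yamabe positivity, $\mathcal{E}([g])=\tfrac{3}{2}V$, and conformal invariance of $\mathcal{W}_b$) so that Theorem \ref{beta b and Weyl sphere} applies. Your final upgrade from conformal equivalence with the hemisphere to isometry with $\mathbb{H}^4$ via $W_{g_+}\equiv 0$ and the classification of space forms is the standard argument and is sound.
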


\begin{remark}
It is not hard to see that Corollaries \ref{PE hyper}, \ref{LQS}, and \ref{PE Weyl} would not be valid without the vanishing condition of $g^{(3)}$, which is shown by the examples constructed in \cite{GL91}.
\end{remark}

\begin{remark}
According to the author's knowledge, there is currently no example of a conformally compact Einstein four-manifold with conformal infinity of positive Yamabe type and vanishing non-local term other than the four-dimensional hyperbolic space. 
\end{remark}

The paper is organized as follows. In Section \ref{prelim} we establish the notations and collect preliminaries. In Section \ref{1.1,1.2,1.3} we prove Theorems \ref{Connect}, \ref{hom}, and \ref{dif} concerning the topological properties of the underlying manifolds under conformally invariant conditions. In Section \ref{1.4,1.5,1.6,1.7} we prove Theorems \ref{beta b sphere}, \ref{conformal E sphere}, \ref{beta b and Weyl sphere}, and \ref{beta b 8} concerning the rigidity of compact Bach-flat four-manifolds with boundary. In Section \ref{final} we record some further remarks.

\section*{acknowledgement}
The author would like to thank Professors Sun-Yung A. Chang and Paul Yang for suggesting the problem and Professor Matthew Gursky for numerous helpful comments and discussions.

\section{Preliminaries}\label{prelim}
\subsection{Conformal and smooth invariants on four-manifolds with boundary}

Let $(M^n,\partial{M^{n}} = \Sigma^{n-1}, g)$ be a compact Riemannian manifold with boundary. The first Yamabe invariant of $(M^n,\Sigma^{n-1}, g)$ is defined as
\begin{align}
    {Y}(M^n,\Sigma^{n-1},[g]) = \inf_{\widetilde{g}\in[g],Vol(\tilde{g}) = 1} \left(\int_MR_{\widetilde{g}}\,dv_{\widetilde{g}}+2\int_{\Sigma}H_{\widetilde{g}}\,d\sigma_{\widetilde{g}}\right).
\end{align}

Any smooth metric in a conformal class attaining this infimum has constant scalar curvature and minimal boundary. From the work of J. Escobar \cite{Esc92}, it is known that in many cases, such a minimizer exists. In particular, for $3\leq{n}\leq{5}$, a minimizer always exists. In addition, Escobar established the following inequality in these dimensions:
\begin{align}
    {Y}(M^n,\Sigma^{n-1},[g])\leq{Y}(S^n_+,S^{n-1},[g_{S_{+}^n}]),
\end{align}
where equality holds if and only if $(M^n,\Sigma^{n-1},g)$ is conformally equivalent to the round hemispehre $(S_+^n,S^{n-1},g_{S_{+}^n})$.

Returning to the Chern-Gauss-Bonnet formula (\ref{CGB}) in four dimensions, we have mentioned that
\begin{align}
    \mathcal{E}([g]) := \int_M\sigma_2(P_g)\,dv_g+\frac{1}{2}\int_{\Sigma}\mathcal{B}_g\,d\sigma_g
\end{align}
is a conformal invariant. The invariant $\mathcal{E}([g])$ has been studied extensively in \cite{Chen09}. In particular, if $Y(M^4,\Sigma^3,[g]) > 0$ and the boundary is umbilic, then we have
\begin{align}
    \mathcal{E}([g]) \leq 2\pi^2,
\end{align} 
where equality holds if and only if $(M^4,\Sigma^{3},g)$ is conformally equivalent to the round hemispehre $(S_+^4,S^{3},g_{S_{+}^4})$. We remark that $\mathcal{B}_g \equiv 0$ if the boundary is totally geodesic. In this case, we have
\begin{align}
    \mathcal{E}([g]) = \int_M\sigma_2(P_g)\,dv_g.
\end{align}
Therefore, $\mathcal{E}([g])$ is a natural generalization of total integral of $\sigma_2$-curvature on four-manifolds with boundary.

Inspired by the study of conformal geometry on closed four-manifolds \cite{CGZ}, we now introduce the following conformal classes on a compact four-manifold with boundary $(M^4,\Sigma^3)$:
\begin{align} 
\mathcal{Y}_{1,b}^{+}(M^4,\Sigma^3) = \{ \, g : Y(M^4,\Sigma^3, [g]) > 0 \, \},
\end{align}
and
\begin{align} 
\mathcal{Y}_{2,b}^{+}(M^4,\Sigma^3) = \{ \, g \in \mathcal{Y}_{1,b}^{+}(M^4,\Sigma^3) : \mathcal{E}([g]) > 0 \, \}.
\end{align}
In addition, we introduce a conformal invariant for metrics $g \in \mathcal{Y}_{2,b}^{+}(M^4,\Sigma^3)$
\begin{align} 
\beta_{b}(M^4,\Sigma^3,[g]) = \dfrac{ \int_{M} \| W_g \|^2 \,dv_g }{\mathcal{E}([g])} \geq 0
\end{align}
and a smooth invariant
\begin{align}
    \beta_{b}(M^4,\Sigma^3) = \inf_{[g]}\beta_{b}(M^4,\Sigma^3,[g]).
\end{align}
If $\mathcal{Y}_{2,b}^{+}(M^4,\Sigma^3) = \emptyset$, set $\beta_{b}(M^4,\Sigma^3) = -\infty$. We remark that $\beta_{b}(M^4,\Sigma^3,[g])$ and $\beta_{b}(M^4,\Sigma^3)$ can be viewed as natural generalizations of $\beta(M^4,[g])$ and $\beta(M^4)$ which are defined on closed four-manifolds in \cite{CGZ}.

\subsection{Conformally compact Einstein manifolds}
In this subsection, we recall some basic notions for conformally compact Einstein manifolds. Suppose $X$ is the interior of a smooth, compact manifold with boundary $(\bar{X}, Z =\partial{X})$. A metric $g_{+}$ defined in $X$ is conformally compact if there is a defining function for the boundary $\rho \, : \, \bar{X} \to \mathbb{R}$ such that $\bar{g} = \rho^2{g_{+}}$ defines a metric on $\bar{X}$. By a defining function, we mean a smooth function with $\rho > 0$ in $X$, $\rho = 0$ and $d\rho \ne 0$ on $\partial{X}$. We will assume in the following that $\bar{g}$ is at least $C^2$ up to the boundary. If $(X,\partial{X}, g_{+})$ is Einstein, then we say that $(X,\partial{X},g_{+})$ is a conformally compact Einstein (CCE) manifold. The choice of defining function is not unique, and thus a conformally compact manifold $(X, \partial{X}, g_{+})$ naturally defines a conformal class of metrics on the boundary, $[h]$, called the conformal infinity.

Suppose $(X^4,Z^3 =\partial{X^4}, g_{+})$ is a conformally compact Einstein four-manifold. Given $h$ in the conformal infinity we may choose a \emph{geodesic} defining function $r$, such that near the boundary $\bar{g} = r^2g_{+}$ can be written as
\begin{align}
    \bar{g} = dr^2 + h + g^{(2)}r^2 + g^{(3)}r^3 + O(r^4),
\end{align}
where $g^{(2)}$ and $g^{(3)}$ are tensors on $Z^3$. In order to give an explanation of AdS/CFT correspondence, M. Henningson and K. Skenderis in \cite{HenSken} defined and calculated the renormalized volume for a conformally compact Einstein manifold. See \cite{Graham} for a detailed mathematical exposition of the renormalized volume. In dimension four, they considered the expansion
\begin{align}
    Vol_{g_{+}}(\{\rho > \epsilon\}) = c_{0}\epsilon^{-3} + c_{2}\epsilon^{-1} + V + o(1),
\end{align}
where $c_0$ and $c_2$ are integrals of local scalar invariants on $Z^3$. More importantly, the constant item $V$ is independent of the choice of defining function and is called the \emph{renormalized volume} of $(X^4,g_{+})$.

Ton conclude this subsection we recall a result of J. Qing in \cite{Qing03} which we specify in four dimensions: 
\begin{lemma}\label{Qing}
Let $(X^4,g_{+})$ be a conformally compact Einstein four-manifold and denote by $(Z^3,[h])$ its conformal infinity. If 
\begin{align}
    Y(Z^3,[h]) > 0,
\end{align}
then we have 
\begin{align}
    Y(X^4,Z^3,[\bar{g}]) > 0,
\end{align}
where $\bar{g} = r^2g_{+}$ is a conformal compactification.
\end{lemma}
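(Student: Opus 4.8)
The plan is to translate positivity of the boundary Yamabe invariant into a spectral statement for the complete Einstein metric and to feed in the hypothesis through the asymptotics at the conformal infinity. Normalize the Einstein condition as $Ric_{g_+}=-3g_+$, so that $R_{g_+}=-12$, and write $\bar g=r^2g_+$. First I would record the standard fact that the sign of $Y(X^4,Z^3,[\bar g])$ is a conformal invariant of $[\bar g]$: it coincides with the sign of the first eigenvalue $\lambda_1$ of the conformally covariant boundary value problem $L_{\bar g}\phi=\lambda_1\phi$ in $X$, $B_{\bar g}\phi=0$ on $Z$, where $L_{\bar g}=-6\Delta_{\bar g}+R_{\bar g}$ is the conformal Laplacian and $B_{\bar g}$ is the associated Robin-type boundary operator built from $\partial_\nu$ and $H_{\bar g}$. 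Equivalently, by Escobar's theory, $Y(X^4,Z^3,[\bar g])>0$ precisely when $[\bar g]$ contains a representative of positive scalar curvature with minimal boundary. Thus it suffices to prove $\lambda_1>0$.

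Next I would extract the decisive positivity from the boundary. Since $Y(Z^3,[h])>0$, I solve the Yamabe problem on the closed $3$-manifold $Z^3$ to choose the representative $h$ with constant scalar curvature $R_h>0$, and pass to the associated geodesic defining function $r$. The expansion $\bar g=dr^2+h+g^{(2)}r^2+g^{(3)}r^3+O(r^4)$ shows that $Z^3$ is totally geodesic in $(\bar X,\bar g)$, so $H_{\bar g}\equiv0$. Moreover the Einstein condition, together with $|dr|^2_{\bar g}=1$, gives the exact interior relation $R_{\bar g}=-6\,\Delta_{\bar g}r/r$; letting $r\to0$ and using $g^{(2)}=-P_h$ (the Schouten tensor of $h$), so that $\mathrm{tr}_h\,g^{(2)}=-R_h/4$, yields
\[
R_{\bar g}\big|_{Z}=-6\,\mathrm{tr}_h\,g^{(2)}=\tfrac{3}{2}R_h>0.
\]
Hence $\bar g$ has positive scalar curvature on a collar of $Z$ and totally geodesic boundary.

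The hard part will be globalization: the collar estimate does not control the sign of $R_{\bar g}$ in the interior, and a maximum-principle argument on $r$ alone fails because a conformally compact Einstein bulk is genuinely negatively curved. To close this gap I would transfer the problem to the complete metric $g_+$ via the conformal covariance $L_{\bar g}u=r^{-3}L_{g_+}(ru)$: setting $v=ru$ rewrites the Rayleigh quotient defining $\lambda_1$ in terms of $g_+$, and (using $R_{g_+}=-12$) reduces $\lambda_1>0$ to the threshold spectral inequality $\int_X|\nabla_{g_+}v|^2\,dv_{g_+}>2\int_X v^2\,dv_{g_+}$ for the admissible test functions $v=ru$. This is exactly where $Y(Z^3,[h])>0$ must enter, and it is the genuine obstacle: one invokes (or reproves) J. Lee's sharp lower bound on the bottom of the $L^2$-spectrum of an asymptotically hyperbolic Einstein manifold, which equals $n^2/4=9/4$ (with $n=\dim Z^3=3$) when the conformal infinity is of nonnegative Yamabe type. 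Since $9/4>2$, the inequality holds with room to spare. The analytic heart, and the step requiring the most care, is the weighted analysis at the conformal infinity: the functions $v=ru$ decay only like $r$ and need not lie in $L^2(g_+)$, so the bound must be obtained by constructing a positive supersolution (barrier) from a positive-scalar-curvature representative of $[h]$, whose existence is guaranteed precisely by the hypothesis $Y(Z^3,[h])>0$. Assembling these ingredients gives $\lambda_1>0$ and therefore $Y(X^4,Z^3,[\bar g])>0$.
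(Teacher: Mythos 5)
The paper gives no proof of this lemma: it is stated as a quoted result of Qing \cite{Qing03}, so there is nothing in-paper to compare against, and your attempt has to be measured against the argument in the cited source (which rests on J. Lee's analysis of the eigenfunction compactification of an asymptotically hyperbolic Einstein manifold). Your overall strategy is the standard one, and your preliminary computations check out: the reduction to the sign of the first Robin eigenvalue of the conformal Laplacian is Escobar's, the identity $R_{\bar g}=-6\,\Delta_{\bar g}r/r$ for a geodesic compactification is correct, and so is the boundary value $R_{\bar g}|_Z=\tfrac32 R_h$. The genuine gap is that the decisive step is only named, not carried out, and the intermediate reduction you write down is ill-posed: for a smooth test function $u$ on $\bar X$ with $u|_Z\ne 0$, both $\int_X|\nabla_{g_+}(ru)|^2\,dv_{g_+}$ and $\int_X(ru)^2\,dv_{g_+}$ diverge (the integrands behave like $r^{-2}$ near $Z$), so the ``threshold spectral inequality'' compares two infinite quantities and Lee's statement that the bottom of the $L^2$-spectrum equals $9/4$ cannot be invoked for these test functions. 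You acknowledge this and gesture at a supersolution, but at that point the Rayleigh-quotient framework has done no work and the whole burden of proof has been deferred to an unproved barrier construction.

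The concrete way to close the gap --- and what Qing actually does --- bypasses the Rayleigh quotient entirely. Choose $h$ of constant scalar curvature $R_h>0$. Lee constructs a positive function $u$ on $X$ with $\Delta_{g_+}u=4u$ and $u=r^{-1}+\tfrac{R_h}{24}r+O(r^2)$, and proves by a Bochner/maximum-principle argument (using the Einstein equation) the global gradient estimate $\sup_X\bigl(|du|^2_{g_+}-u^2\bigr)<0$ whenever $R_h>0$; this estimate, not the spectral bound itself, is the analytic content. The conformal transformation law then gives, for the compactification $g^*=u^{-2}g_+$,
\begin{align*}
R_{g^*}=12\bigl(u^2-|du|^2_{g_+}\bigr)>0,
\end{align*}
while the expansion of $u$ shows that $g^*$ agrees with the geodesic compactification to second order in $r$, so its boundary is totally geodesic. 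A metric in $[\bar g]$ with positive scalar curvature and minimal boundary forces the first Robin eigenvalue, and hence $Y(X^4,Z^3,[\bar g])$, to be positive. If you prefer your barrier formulation, the same estimate shows $\phi=u^{-1}$ satisfies $L_{g_+}\phi=12u^{-3}(u^2-|du|^2_{g_+})\ge 0$ with $\phi\sim r$, which is exactly the supersolution you would need; either way, the estimate on $|du|^2_{g_+}-u^2$ must be proved, not cited as a black box labelled ``Lee's bound''.
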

By Lemma \ref{Qing}, it is clear that Theorems B and C can be viewed as applications of Theorems \ref{hom} and \ref{dif} to conformally compact Einstein four-manifolds. 

\subsection{Bach-flat metrics on four-manifolds with boundary} In this subsection, we discuss Riemannian functionals and their critical points on four-manifolds with boundary. We will focus on Bach-flat metrics with conformally invariant boundary conditions.

We start with a concise review on Riemannian functionals on {closed} manifolds. See Chapter 4 of \cite{Bes87} for a detailed description of Riemannian functionals. Suppose $M^n$ is a closed, smooth manifold and denote by $\mathcal{M}$ the set of Riemannian metrics on $M$. A Riemannian functional is a real-valued function $\mathcal{F}$ on $\mathcal{M}$ such that $\mathcal{F}(\varphi^{*}(g)) = \mathcal{F}(g)$ for every diffeomorphism $\varphi$ and $g\in\mathcal{M}$. It turns out that Riemannian functionals play an important role in the study of geometry and topology of manifolds. A prominent example is the (normalized) Hilbert-Einstein functional:
\begin{align}
   \mathcal{F}:\,\,\, g \,\,\, \mapsto \,\,\, Vol(g)^{-\frac{n-2}{n}}\int_MR_{g}\, dv_g.
\end{align}
Critical points of $\mathcal{F}$ are Einstein metrics. In four dimensions, quadratic functionals are of particular interest. An important example is the Weyl functional:
\begin{align}
   \mathcal{W}:\,\,\, g \,\,\, \mapsto \,\,\, \int_{M}||W_{g}||^2\, dv_g.
\end{align}
Critical points of $\mathcal{W}$ are Bach-flat metrics. In four dimensions, the Bach tensor is defined  as
\begin{align}
   B_{\alpha\beta} = \nabla^{\gamma}\nabla^{\delta} W_{\alpha\gamma\beta\delta} + P^{\gamma\delta}W_{\alpha\gamma\beta\delta}.
\end{align}
A metric is called Bach-flat if its Bach tensor is vanishing identically. Note that the Weyl functional $\mathcal{W}$ is conformally invariant in four dimensions in the sense that $\mathcal{W}(\widetilde{g}) = \mathcal{W}(g)$ for any $\widetilde{g}\in[g]$. The moduli spaces of Einstein metrics and Bach-flat metrics have been studied extensively; see \cite{And}\cite{AC91}\cite{TV05a}\cite{TV05b}. From the viewpoint of calculus of variations, it is important to understand the rigidity and stability of a critical metric. In this direction, significant progress has been made in \cite{CQY}\cite{GV15}.

Next we turn to Riemannian functionals on compact manifolds with boundary. We shall focus on two important examples.

The (normalized) Hilbert-Einstein functional on $(M^n, \Sigma^{n-1},g)$ is defined as:
\begin{align}
    \mathcal{F}_{b}:\,\,\, g\,\,\, \mapsto\,\,\, Vol(g)^{-\frac{n-2}{n}}\left(\int_MR_{g}\, dv_g + 2\int_{\Sigma}H_g\, d{\sigma}_g\right),
\end{align}
where $H$ is the mean curvature of $\Sigma^{n-1}$. Critical points of $\mathcal{F}_{b}$ are Einstein metrics with totally geodesic boundary \cite{Araujo03}. Note that the infimum of the restriction of normalized Hilbert-Einstein functional in a conformal class gives the first Yamabe invariant.
     
The Weyl functional on $(M^4, \Sigma^{3},g)$ is defined as:
\begin{align}
       \mathcal{W}_b:\,\,\, g \,\,\, \mapsto \,\,\, \int_{M^4}||W_{g}||^2\, dv_g + 2\int_{\Sigma^3}W_{i0j0}L^{ij}\, d{\sigma}_g,
\end{align}
where $L$ is the second fundamental form of $\Sigma^3$, Latin letters run through $1,2,3$ as tangential directions, and $0$ is the outward normal direction on $\Sigma^3$. We remark that $\mathcal{W}_{b} = \mathcal{W}$ on four-manifold with umbilic boundary. Therefore, $\mathcal{W}_b $ can be viewed as a natural generalization of Weyl functional on four-manifolds with boundary since $W_{i0j0}L^{ij} \equiv 0$ on umbilic boundary. The functional $\mathcal{W}_b$ has been studied extensively in \cite{CG18}. It is noteworthy to point out that $\mathcal{W}_b$ is conformally invariant in four dimensions in the sense that $\mathcal{W}_b(\widetilde{g}) = \mathcal{W}_b(g)$ for any $\widetilde{g}\in[g]$. Indeed, $||W_{g}||^2\, dv_g$ and $W_{i0j0}L^{ij}\, d{\sigma}_g$ are pointwise conformally invariant differential forms in $M^4$ and on $\Sigma^3$, respectively.
Critical points of $\mathcal{W}_{b}$ are Bach-flat metrics with vanishing $S$-tensor. In this case, we will say that the boundary is $S$-flat. The $S$-tensor is defined \cite{CG18} on the boundary $\Sigma^3$ as
\begin{align}\label{S def}
         S_{ij} := \nabla^{\alpha}W_{{\alpha}i0j} + \nabla^{\alpha}W_{{\alpha}j0i} - \nabla^0W_{0i0j} + \frac{4}{3}HW_{0i0j}.
\end{align}
The basic conformal properties of the Bach tensor and the $S$-tensor are given in the following lemma:
\begin{lemma}[\cite{CGY02}\cite{CG18}\cite{Der83}]\label{S tensor}
The Bach tensor $B_{\alpha\beta}$ and $S$-tensor $S_{ij}$ on $(M^4,\Sigma^3,g)$ satisfy the following relations:
\begin{enumerate}
    \item $B_{\alpha\beta}$ is symmetric, trace-free, divergence-free and conformally invariant in the sense that for $\widetilde{g} = e^{2w}g$,
           \[B_{\widetilde{g}} = e^{-2w}B_g.\]
    \item $S_{ij}$ is symmetric, trace-free and conformally invariant in the sense that $\widetilde{g} = e^{2w}g$,
           \[S_{\widetilde{g}} = e^{-w}S_g.\]
    \item If the boundary is totally geodesic, then 
    \[S_{ij} = \nabla^{0}P_{ij}.\]
\end{enumerate}
\end{lemma}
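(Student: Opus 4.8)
The plan is to dispatch the three assertions in order of increasing difficulty, isolating the purely algebraic facts (which follow from the symmetries of the Weyl tensor) from the analytic transformation laws (which are cleanest from the variational origin of $B$ and $S$) and finally from the one genuine computation, part (3).

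For part (1), I would first prove trace-freeness algebraically: contracting $g^{\alpha\beta}$ into either term of $B_{\alpha\beta}$ produces $g^{\alpha\beta}W_{\alpha\gamma\beta\delta}$, which vanishes because $W$ is totally trace-free. The remaining three properties are most transparently read off from the variational characterization of $B$, namely that up to a universal constant $B_{\alpha\beta}$ is the $L^2$-gradient of the Weyl functional $\mathcal{W}$. The gradient of any Riemannian functional is a symmetric $2$-tensor, so $B$ is symmetric; diffeomorphism invariance of $\mathcal{W}$ forces its gradient to be divergence-free by the standard contracted-Bianchi (Noether) argument; and conformal invariance of $\mathcal{W}$ in dimension four forces the gradient to transform with the weight $B_{\widetilde g}=e^{-2w}B_g$. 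These are exactly the facts recorded in \cite{Der83} and \cite{CGY02}, which I would cite.

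For part (2), symmetry of $S_{ij}$ is immediate: the first two terms are an explicit symmetrization in $(i,j)$, while $W_{0i0j}=W_{0j0i}$ by pair symmetry makes the last two terms symmetric. Trace-freeness over the tangential indices follows by contracting with $g^{ij}$ and using that $W$ is totally trace-free together with $W_{\alpha 0 0 0}=0$ (antisymmetry in the final pair); writing the ambient trace as $g^{\mu\nu}=g^{ij}+g^{00}$ in Fermi coordinates along $\Sigma^3$ shows the tangential trace of each term vanishes. The conformal weight $S_{\widetilde g}=e^{-w}S_g$ I would again obtain from the variational origin of $S$ as the boundary integrand in the first variation of the conformally invariant functional $\mathcal{W}_b$, citing \cite{CG18}.

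Part (3) is where the real work lies. Assuming $L_{ij}\equiv 0$, so $H=0$ and the last term of $S_{ij}$ drops, I would substitute the dimension-four divergence identity $\nabla^{\alpha}W_{\alpha\beta\gamma\delta}=\nabla_{\gamma}P_{\delta\beta}-\nabla_{\delta}P_{\gamma\beta}$ into the first two terms to get $\nabla^{\alpha}W_{\alpha i0j}+\nabla^{\alpha}W_{\alpha j0i}=2\nabla_0 P_{ij}-\nabla_i P_{0j}-\nabla_j P_{0i}$. For the remaining term I would use the Ricci decomposition, which along a totally geodesic boundary in geodesic coordinates reduces to $R_{0i0j}=W_{0i0j}+P_{00}\,g_{ij}+P_{ij}$, differentiate in the normal direction, and then rewrite the resulting normal derivative of the full curvature tensor via the second Bianchi identity; the Codazzi equations for the totally geodesic boundary annihilate the mixed normal-tangential curvature components, and the surviving terms collapse to $\nabla^0 P_{ij}$. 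The chief obstacle is precisely this bookkeeping: tracking tangential versus normal indices, pinning down the sign conventions in the divergence-of-Weyl and second Bianchi identities, and verifying that $L\equiv 0$ eliminates exactly the right mixed components. This is the only step not reducible to tensor symmetries or the variational descriptions of $\mathcal{W}$ and $\mathcal{W}_b$, and it is the reduction carried out in \cite{CG18}, which I would either reproduce or cite directly.
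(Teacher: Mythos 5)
The paper itself does not prove this lemma: it is stated with citations to \cite{CGY02}, \cite{CG18}, and \cite{Der83}, and no argument is supplied in the text (part (3) is in fact later \emph{used} as an input in the proof of Lemma \ref{curvature tot geo}). So there is no in-paper proof to compare against, and your proposal has to be judged on its own merits. Parts (1) and (2) are fine as sketched: trace-freeness of $B$ from the total trace-freeness of $W$; symmetry, divergence-freeness, and the weight $e^{-2w}$ of $B$ from its being the gradient of the diffeomorphism- and conformally invariant functional $\mathcal{W}$; symmetry of $S$ from the explicit symmetrization together with $W_{0i0j}=W_{0j0i}$; tangential trace-freeness from $g^{\mu\nu}W_{\mu\alpha\nu\beta}=0$ and $W_{\alpha 000}=0$; and the weight $e^{-w}$ from the boundary term of the first variation of $\mathcal{W}_b$ (the requirement that $\langle S_{\widetilde g},\delta h\rangle_{\widetilde g}\,d\sigma_{\widetilde g}=\langle S_g,\delta h\rangle_g\,d\sigma_g$ with $d\sigma_{\widetilde g}=e^{3w}d\sigma_g$ forces exactly $e^{-w}$). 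These are the arguments of the cited references.

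The one step I would push back on is your treatment of the term $-\nabla^0W_{0i0j}$ in part (3). Differentiating the decomposition $R_{0i0j}=W_{0i0j}+P_{ij}+g_{ij}P_{00}$ in the normal direction introduces the two extra unknowns $\nabla^0R_{0i0j}$ and $\nabla^0P_{00}$, and you cannot eliminate $\nabla^0P_{00}$ without an additional hypothesis such as constant scalar curvature --- that is precisely what the paper's Lemma \ref{S equal to h3} needs it for, and part (3) assumes no such thing. The direct route, which is the one the paper's own Lemma \ref{curvature tot geo} takes when it derives $\nabla^0P_{ij}=\nabla^0W_{0i0j}$, is: on a totally geodesic boundary the Codazzi equation gives $R_{ijk0}=0$, hence $P_{0j}=0$ and $W_{ki0j}=0$ on $\Sigma$, and since the Christoffel symbols mixing normal and tangential directions are built from $L$ and vanish, the tangential covariant derivatives of these vanishing quantities also vanish, so $\nabla^kW_{ki0j}=0$ and $\nabla_iP_{0j}=0$ on $\Sigma$. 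Then $\nabla^0W_{0i0j}=\nabla^{\alpha}W_{\alpha i0j}=\nabla_0P_{ij}-\nabla_jP_{0i}=\nabla_0P_{ij}$, while your own computation of the first two terms gives $2\nabla_0P_{ij}$, whence $S_{ij}=2\nabla_0P_{ij}-\nabla_0P_{ij}=\nabla^0P_{ij}$. With that substitution the proof closes; as written, the detour through $\nabla^0R_{0i0j}$ does not.
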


\subsection{Analytic properties of critical metrics}
In this subsection, we review the analytic properties of critical metrics. In particular, we discuss the smoothness of Einstein metrics and Bach-flat metrics.

An Einstein metric $g$ on a closed manifold $M^n$ satisfies an elliptic system of second order in harmonic coordinates \cite{DK81}:
\begin{align}\label{Einstein harmonic}
    \lambda{g}_{ij} = R_{ij} = -\frac{1}{2}g^{kl}\frac{\partial^2g_{ij}}{\partial{x^k}\partial{x^l}}+\cdots
\end{align}
where the dots indicate terms involving at most one derivative of the metric. From elliptic theory, it follows that the metric $g$ is analytic in harmonic coordinates and geodesic normal coordinates. Note that it is not hard to prove that a $C^{2,\alpha}$ metric $g$ satisfying Einstein condition is analytic. It is noteworthy to point out that the Einstein equation $Ric(g) = \lambda{g}$ is only {weakly} elliptic. The degeneracy is a result of the invariance of $Ric(g) = \lambda{g}$ under the action of Diff($M$), where Diff($M$) is the group of diffeomorphisms on $M$. The choice of gauge (harmonic coordinates) eliminates this degeneracy. 

\begin{remark}
The Einstein equation can be regarded as a (weakly) elliptic system of second order for the metric and the second fundamental form of the boundary can be viewed as the ``normal derivative'' of the metric on the boundary. Hence, totally geodesic boundary can be considered as vanishing Neumann boundary condition for Einstein equation. 
\end{remark}

A Bach-flat metric $g$ on a closed four-manifold $M^4$ with constant scalar curvature satisfies an elliptic system of fourth order in harmonic coordinates; see\cite{TV05a}\cite{TV05b}. Since this fact is not explicit in literature, we include calculations here. The Bach tensor can be rewritten with second Bianchi identity as \cite{Der83}
\begin{align}\label{Bach rewritten}
B_{ij} = -\frac{1}{2}\Delta{E_{ij}} + \frac{1}{6} \nabla_i\nabla_j{R} - \frac{1}{24}\Delta{R}g_{ij} - E^{kl}W_{ikjl} +E_i^kE_{jk} -\frac{1}{4}|E|^2g_{ij} + \frac{1}{6}RE_{ij}
\end{align}

If the scalar curvature is constant ($R=c$), then we have
\begin{align}
B_{ij} = -\frac{1}{2}\Delta{E_{ij}} - E^{kl}W_{ikjl} +E_i^kE_{jk} -\frac{1}{4}|E|^2g_{ij} + \frac{1}{6}cE_{ij}
\end{align}
Then Bach-flat equation in harmonic coordinates can be written as
\begin{align}
0 =  B_{ij}=  \frac{1}{4}g^{rs}g^{kl}\frac{\partial^4g_{ij}}{\partial{x^r}\partial{x^s}\partial{x^k}\partial{x^l}} + \cdots
\end{align}
where the dots indicate terms involving at most three derivatives of the metric. Hence, standard elliptic theory implies that $g$ is analytic in harmonic coordinates and geodesic normal coordinates. Note that it is not hard to prove that a $C^{4,\alpha}$ Bach-flat metric $g$ with constant scalar curvature is analytic. It is noteworthy to point out that Bach-flat equation is conformally invariant and invariant under Diff($M^4$), which leads to degeneracy of Bach-flat equations in the directions of diffeomorphisms and conformal transformations. The choices of harmonic coordinates and constant scalar curvature enable us to eliminate the degeneracy, respectively. Note that the resolution to the Yamabe problem shows that we can always find a metric of constant scalar curvature in a conformal class.

\begin{remark}
The Bach-flat equation can be regarded as a (weakly) elliptic system of fourth order for the metric and the $S$-tensor can be viewed as the ``third order normal derivative'' of the metric on the boundary, at least for the compactification of a conformally compact Einstein manifold. Hence, the vanishing $S$-tensor can be considered as a vanishing third order boundary condition for Bach-flat equation. Note that it is proper to impose two boundary conditions for an elliptic system of fourth order. The other boundary condition we impose is the umbilic condition. We point out that Bach-flatness, $S$-flatness, and umbilicity are all conformally invariant conditions.
\end{remark}

\subsection{Expansion of Riemannian metric near the boundary}
In this short subsection, we discuss the expansion of Riemannian metric near the boundary. Suppose $(M^n,\Sigma^{n-1},g)$ is a smooth manifold with boundary and $g$ is a Riemannian metric smooth up to the boundary.  Let $\{ x^i \}$ be local coordinates on $\Sigma^{n-1}$. If $r$ is the distance function to $\Sigma^{n-1}$, then we can identify a collar neighborhood of the boundary with $\Sigma^{n-1}\times{[0,\epsilon)}$, with coordinates given by $(x_i,r)$.  We want to compute the expansion of $g$ in $\Sigma^{n-1}\times{[0,\epsilon)}$.  In $\Sigma^{n-1}\times[0,\epsilon)$, write the metric $g$ as
\begin{align}
    g = dr^2 + h_{ij}(x,r)dx^idx^j,
\end{align}
where
\begin{align}
    h_{ij} = \left\langle{\partial_i,\partial_j}\right\rangle.
\end{align}

The main formulas are listed in \cite{GurGra19} and detailed calculations are given in \cite{GZ20}. We summarize these useful formulas in the following lemma.

\begin{lemma}\label{boundary expansion}
Suppose $(M^{n}, \Sigma^{n-1}, g)$ is a Riemannian manifold with boundary. Then we have the expansion for metric $g$ in $\Sigma\times[0,\epsilon)$
\begin{align}
    g = dr^2 + h_{ij}(x,r)dx^idx^j
\end{align}
where
\begin{align}
    h_{ij}(x,r) = h_{ij}^{(0)} + rh_{ij}^{(1)} + \frac{r^2}{2!}h_{ij}^{(2)} + \frac{r^3}{3!}h_{ij}^{(3)} + \frac{r^4}{4!}h_{ij}^{(4)} + O(r^5)
\end{align}
where $h_{ij}^{(k)}$ are symmetric $2$-tensors defined on $\Sigma^{n-1}$
\begin{align} \label{expansion near the boundary}
    \begin{split}
        h_{ij}^{(0)} = & \,\,\, g_{ij} \\
        h_{ij}^{(1)} = & -2L_{ij} \\
        h_{ij}^{(2)} = & -2R_{0i0j} + 2L_{ik}L_{j}^k \\
        h_{ij}^{(3)} = & -2\nabla_0{R}_{0i0j}+4L^k_{i}{R}_{j0k0} + 4L^k_{j}{R}_{i0k0}, \\
        h_{ij}^{(4)} = & -2\nabla_0\nabla_0R_{0i0j} + 6\nabla_0{R_{0i0k}}L_{j}^{k} + 6\nabla_0{R_{0j0k}}L_{i}^{k} \\
    & - 4R_{0i0k}L_{l}^{k}L_{j}^l - 4R_{0j0k}L_{l}^{k}L_{i}^l + 8R_{0i0}^k{R}_{0j0k}
    \end{split}
\end{align}
\end{lemma}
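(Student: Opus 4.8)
The plan is to work in the geodesic (Fermi) coordinates $(x^i,r)$ already set up before the statement and to extract the tensors $h^{(k)}_{ij}$ one at a time, by relating the radial derivatives $\partial_r^k h_{ij}|_{r=0}$ to the ambient curvature and the second fundamental form via the radial structure equations. First I would record the consequences of $r$ being the distance function to $\Sigma$: the field $\partial_r$ is a unit geodesic field, so $g_{00}\equiv 1$ and $g_{0i}\equiv 0$, and the only Christoffel symbols that neither vanish nor reduce to the intrinsic ones of $h(\cdot,r)$ are
\[
\Gamma^0_{ij} = -\tfrac12\,\partial_r h_{ij}, \qquad \Gamma^i_{0j} = \tfrac12\, h^{ik}\partial_r h_{kj}, \qquad \Gamma^0_{00}=\Gamma^i_{00}=\Gamma^0_{0i}=0.
\]
The coefficient $h^{(0)}_{ij}=g_{ij}$ is just the definition of the induced metric, and comparing $\Gamma^0_{ij}$ with the definition of the second fundamental form gives $\partial_r h_{ij}|_{r=0}=-2L_{ij}$, that is $h^{(1)}_{ij}=-2L_{ij}$.

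Next I would derive the radial curvature (Jacobi/Riccati) identity by substituting these Christoffel symbols into the Riemann tensor. A direct computation yields, for all $r$,
\[
R_{0i0j} = -\tfrac12\,\partial_r^2 h_{ij} + \tfrac14\, h^{kl}\,\partial_r h_{ik}\,\partial_r h_{jl}.
\]
Solving for $\partial_r^2 h_{ij}$ and evaluating at $r=0$, where $\partial_r h_{ij}=-2L_{ij}$, gives $h^{(2)}_{ij}=-2R_{0i0j}+2L_{ik}L^k_j$.

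The coefficients $h^{(3)}$ and $h^{(4)}$ then come from differentiating this identity in $r$. Each differentiation produces $\partial_r^m R_{0i0j}$ together with products of $\partial_r h$, $\partial_r^2 h$, and $\partial_r h^{kl}=-h^{ka}h^{lb}\partial_r h_{ab}$. The essential step is to rewrite the coordinate derivatives $\partial_r^m R_{0i0j}$ as covariant derivatives $\nabla_0^m R_{0i0j}$; since $\Gamma^m_{0i}|_{r=0}=-L^m_i$, the first-order conversion reads
\[
\partial_r R_{0i0j}\big|_{r=0} = \nabla_0 R_{0i0j} - L^m_i R_{0m0j} - L^m_j R_{0i0m}.
\]
Inserting this, together with $\partial_r h|_0=-2L$ and the formula for $h^{(2)}$, into the once-differentiated identity, and simplifying the curvature-times-$L$ terms using the symmetries $R_{0i0j}=R_{0j0i}$ and $R_{0i0j}=R_{i0j0}$, the purely cubic-in-$L$ contributions cancel and one recovers $h^{(3)}_{ij}=-2\nabla_0 R_{0i0j}+4L^k_i R_{j0k0}+4L^k_j R_{i0k0}$. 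Differentiating once more and converting $\partial_r^2 R_{0i0j}$ into $\nabla_0\nabla_0 R_{0i0j}$ with the analogous (now twofold) Christoffel corrections produces $h^{(4)}$.

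The main obstacle is not conceptual but the bookkeeping in this last step: the passage from $\partial_r^2 R_{0i0j}$ to $\nabla_0\nabla_0 R_{0i0j}$ generates several correction terms that are quadratic in $L$ and linear in $\nabla_0 R$, and these must be combined with the many products arising from differentiating $\tfrac14 h^{kl}\partial_r h_{ik}\partial_r h_{jl}$ twice and from expanding $\partial_r h^{kl}$. Keeping the index symmetries of the Riemann tensor straight, and verifying that the redundant higher-order-in-$L$ terms cancel so that precisely the terms displayed in \eqref{expansion near the boundary} survive, is the delicate part; this is exactly the computation carried out in \cite{GZ20}, which I would follow.
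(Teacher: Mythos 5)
Your proposal is correct and follows essentially the same route as the paper, which itself gives no proof of this lemma but defers the computation to \cite{GurGra19} and \cite{GZ20}: Fermi coordinates, the radial (Riccati) identity $\partial_r^2 h_{ij}=-2R_{0i0j}+\tfrac12 h^{kl}\partial_r h_{ik}\partial_r h_{jl}$, successive $r$-differentiation, and conversion of $\partial_r^m R_{0i0j}$ into covariant derivatives via the Christoffel corrections $\Gamma^m_{0i}|_{r=0}=-L^m_i$. Your intermediate identities check out (in particular the cancellation of the cubic-in-$L$ terms in $h^{(3)}$), and deferring the $h^{(4)}$ bookkeeping to \cite{GZ20} is consistent with what the paper itself does.
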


\subsection{The double of a Riemannian manifold with boundary}
We first recall the definition of the double of a smooth manifold with boundary. If $(M^n,\Sigma^{n-1})$ is a smooth manifold with boundary $\Sigma$, its double is obtained by gluing two copies of $(M^n,\Sigma^{n-1})$ together along their common boundary. Precisely speaking, the double is defined as $N := M\times \{0,1\} / \sim $ where $(x,0)\sim (x,1)$ for all $x\in\Sigma$. In this note, we shall also use the notation $N = M \bigcup_{\Sigma} M'$, where $M'$ is another copy of $M$. The double is a \emph{closed} smooth manifold. Now consider a Riemannian metric $g$ on $(M^n,\Sigma^{n-1})$. It follows that $g$ extends naturally to a metric $g_d$ on the double $N$. However, $g_d$ might be only \emph{continuous} across $\Sigma^{n-1}$. Note that regularity could only fail in the normal direction.

We have the following simple but useful lemma.
\begin{lemma}\label{C2alpha}
Suppose $(M^n,\Sigma^{n-1},g)$ is smooth up to the boundary and has totally geodesic boundary. Then the double manifold $(N,g_d)$ has $C^{2,\alpha}$ metric for any $0<\alpha<1$.
\end{lemma}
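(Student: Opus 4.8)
The plan is to study the regularity of $g_d$ in a collar of $\Sigma^{n-1}$ inside the double $N = M \bigcup_{\Sigma} M'$, since away from the gluing locus $g_d$ is manifestly smooth. First I would set up adapted coordinates: let $r$ denote the distance to $\Sigma$ in each copy, and identify a collar of $\Sigma$ in $N$ with $\Sigma^{n-1}\times(-\epsilon,\epsilon)$, where $r\ge 0$ parametrizes $M$ and $r\le 0$ parametrizes $M'$. Because the gluing is by the identity on $\Sigma$, the two copies carry the \emph{same} metric $g$, and $r$ is the signed distance, the extended metric takes the form
\begin{align}
    g_d = dr^2 + h_{ij}(x,|r|)\,dx^i dx^j
\end{align}
in this collar. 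In particular $g_{rr}=1$ and $g_{ri}=0$ are smooth, so the only components whose regularity across $\{r=0\}$ is in question are the tangential ones $g_{ij}=h_{ij}(x,|r|)$.

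Next I would substitute the expansion of Lemma \ref{boundary expansion} and isolate the obstructions to smoothness. Since $g$ is smooth up to $\Sigma$, the expansion is valid to all orders, and writing
\begin{align}
    h_{ij}(x,|r|) = h_{ij}^{(0)} + |r|\,h_{ij}^{(1)} + \frac{r^2}{2!}h_{ij}^{(2)} + \frac{|r|^3}{3!}h_{ij}^{(3)} + \frac{r^4}{4!}h_{ij}^{(4)} + O(|r|^5),
\end{align}
the even-order terms $r^{2k}$ are automatically smooth functions of $r$, while the odd-order terms become $|r|^{2k+1}$ and carry all the potential non-smoothness. The lowest odd term is $|r|\,h_{ij}^{(1)} = -2|r|\,L_{ij}$, which is only Lipschitz and destroys $C^1$ regularity whenever $L\not\equiv 0$. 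The totally geodesic hypothesis $L_{ij}\equiv 0$ forces $h_{ij}^{(1)}=0$, eliminating precisely this term.

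With the $|r|$ term gone, the first remaining obstruction is $\tfrac{1}{6}|r|^3 h_{ij}^{(3)}$. The key elementary fact is that $|r|^3 \in C^{2,\alpha}$ for every $\alpha\in(0,1)$: its second derivative equals $6|r|$, which is Lipschitz (hence $\alpha$-Hölder) but fails to be differentiable at the origin, so $|r|^3\notin C^3$. The higher odd terms $|r|^5,|r|^7,\dots$ are strictly more regular. Since each coefficient $h_{ij}^{(k)}$ is a smooth tensor on $\Sigma^{n-1}$, tangential and mixed derivatives act only on these smooth coefficients and do not degrade regularity below the normal-direction bottleneck; hence each $g_{ij}$, and therefore $g_d$, is of class $C^{2,\alpha}$ across $\Sigma$ for every $\alpha\in(0,1)$.

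The main point to verify carefully is the collar normal form $g_d = dr^2 + h_{ij}(x,|r|)\,dx^i dx^j$ together with the claim that mixed tangential-normal derivatives are governed by the same $|r|^{2k+1}$ structure; once this is in place the proof reduces to the regularity bookkeeping above. I would also record that the conclusion is sharp: under the totally geodesic condition one still has $h_{ij}^{(3)} = -2\nabla_0 R_{0i0j}$, which is generically nonzero, so the double cannot be expected to be $C^3$. This is exactly why $C^{2,\alpha}$ is the natural regularity class in which to apply second-order elliptic theory to $(N,g_d)$.
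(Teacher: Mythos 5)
Your argument is correct and follows essentially the same route as the paper: both use the collar expansion of Lemma \ref{boundary expansion}, observe that the totally geodesic hypothesis kills the $|r|\,h^{(1)}_{ij}$ term, and conclude $C^{2,\alpha}$ regularity from the fact that the first surviving odd term is $O(|r|^3)$. Your version simply spells out the $|r|$-bookkeeping and the sharpness remark that the paper's two-line proof leaves implicit.
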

\begin{proof}
It is obvious that the regularity of $g_d$ can only fail across the boundary $\Sigma^{n-1}$. If the boundary is totally geodesic, then we have the following expansion for $g_d$ on $\Sigma\times(-r,r)$ from (\ref{expansion near the boundary}):
\begin{align}
    g_d = dr^2 + h_0 + \frac{h''}{2!}r^2 + O(r^3)
\end{align}
where $h_0 = g_d|_{\Sigma}$. It is then clear that $g_d$ is $C^{2,\alpha}$ on $N$.
\end{proof}
From Lemma \ref{C2alpha}, it is clear that the Riemannian curvature tensor is well-defined on $(N,g_d)$ with $C^\alpha$ regularity.

\section{Proofs of Theorems \ref{Connect}, \ref{hom}, and \ref{dif}}\label{1.1,1.2,1.3}
\subsection{Proof of Theorem \ref{Connect}}
In this subsection, we prove Theorem \ref{Connect}. We apply conformal deformation to obtain a metric $g_w = e^{2w}g\in[g]$ such that $(M^4,\Sigma^3,g_w)$ satisfies $\sigma_2(P_{g_w})>0$ and the boundary is totally geodesic. From there, a vanishing argument for compact Riemannian manifold with boundary will imply the result. 

\begin{proof}[Proof of Theorem \ref{Connect}]
From the proof of Theorem 1 in \cite{Chen09}, there is a metric $\hat{g} \in [g]$ such that $R_{\hat{g}} > 0$, $\sigma_2(P_{\hat{g}}) > 0$ and the boundary is minimal. Recall that we assume the boundary is umbilic and umbilicity is a conformally invariant condition. Hence, the boundary is totally geodesic. It follows from Lemma 1.2 of \cite{CGY02} in four dimensions $R_{\hat{g}} > 0$ and $\sigma_2(P_{\hat{g}}) > 0$ imply that $Ric_{\hat{g}} > 0$. Now we need the following lemma which is an analogue of Bochner's vanishing theorem for compact Riemannian manifolds with boundary.

\begin{lemma}[Proposition 3 in \cite{Wang19}]\label{Bochner}
Let $(M^n,\Sigma^{n-1},g)$ be a compact Riemannian manifold with positive Ricci
curvature and convex boundary in the sense that the second fundamental form is positive semi-definite. Then both $H^1(M,\Sigma,\mathbb{R})$ and $H^1(M,\mathbb{R})$ vanish.
\end{lemma}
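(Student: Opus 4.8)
The plan is to prove this Bochner-type vanishing theorem by the classical Weitzenb\"ock technique, carefully tracking the contribution from the boundary. First I would pass from cohomology to harmonic fields via Hodge--Morrey--Friedrichs theory on a compact manifold with boundary: $H^1(M,\mathbb{R})$ is represented by $1$-forms $\omega$ that are closed and coclosed ($d\omega=0$, $\delta\omega=0$) and satisfy the \emph{absolute} (Neumann) boundary condition $\iota_\nu\omega=0$, i.e.\ vanishing normal component, while $H^1(M,\Sigma,\mathbb{R})$ is represented by such harmonic fields satisfying the \emph{relative} (Dirichlet) boundary condition $\iota^{*}\omega=0$, i.e.\ vanishing tangential part. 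Here $\nu$ denotes the outward unit normal. It then suffices to show that in each class the only harmonic field is $\omega\equiv 0$.

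Next, for a harmonic field $\omega$ I would apply the Weitzenb\"ock formula $\Delta_H\omega=\nabla^{*}\nabla\omega+\mathrm{Ric}(\omega)$ for $1$-forms. Since $\Delta_H\omega=0$ pointwise, pairing with $\omega$, integrating over $M$, and integrating the connection (Bochner) Laplacian by parts once produces a single boundary term:
\begin{align*}
0=\int_M|\nabla\omega|^2\,dv+\int_M\mathrm{Ric}(\omega,\omega)\,dv-\int_\Sigma\langle\nabla_\nu\omega,\omega\rangle\,d\sigma.
\end{align*}
The heart of the argument is to show that the boundary integrand carries the favorable sign under the convexity hypothesis.

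To handle the boundary term I would treat the two boundary conditions separately, in each case using the harmonic-field equations to rewrite the normal derivative in terms of the boundary geometry. In the absolute case $\omega$ is tangential along $\Sigma$, and closedness (symmetry of $\nabla_i\omega_j$) together with $\omega(\nu)\equiv 0$ should give $\langle\nabla_\nu\omega,\omega\rangle=-L(\omega^{T},\omega^{T})$, so that the boundary contributes $+\int_\Sigma L(\omega^{T},\omega^{T})\,d\sigma\ge 0$ because $L$ is positive semidefinite. In the relative case $\omega=\omega_\nu\,\nu^{\flat}$ along $\Sigma$, and coclosedness ($\delta\omega=0$) together with $\omega^{T}\equiv 0$ should give $\langle\nabla_\nu\omega,\omega\rangle=-H\,\omega_\nu^{2}$, so that the boundary contributes $+\int_\Sigma H\,\omega_\nu^{2}\,d\sigma\ge 0$, since $H=\mathrm{tr}\,L\ge 0$ follows from convexity. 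In both cases the displayed identity becomes a sum of nonnegative terms.

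Because $\mathrm{Ric}>0$ strictly, this forces $\int_M\mathrm{Ric}(\omega,\omega)\,dv=0$ and hence $\omega\equiv 0$, yielding $H^1(M,\mathbb{R})=H^1(M,\Sigma,\mathbb{R})=0$. The step I expect to be the main obstacle is the boundary-term computation: one must verify the two identities for $\langle\nabla_\nu\omega,\omega\rangle$ with the correct signs relative to the chosen conventions for $L$ and $\nu$, which requires a careful local calculation along $\Sigma$ exploiting the closed and coclosed conditions. A secondary point, standard but worth citing precisely, is the Hodge--Morrey--Friedrichs representation of the two cohomology groups by harmonic fields satisfying the prescribed absolute and relative boundary conditions.
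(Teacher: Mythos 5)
Your proposal is correct and follows essentially the same route as the paper's own sketch: both represent $H^1(M,\mathbb{R})$ and $H^1(M,\Sigma,\mathbb{R})$ by harmonic fields with absolute and relative boundary conditions respectively, apply the Weitzenb\"ock formula, and identify the boundary term as $\int_\Sigma L(\omega^T,\omega^T)\,d\sigma$ in the absolute case and $\int_\Sigma H\,\omega_\nu^2\,d\sigma$ in the relative case, with the signs you anticipate. The paper likewise leaves the boundary computation as ``direct calculations on $\Sigma$,'' so no further comparison is needed.
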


For the sake of completeness, we sketch the proof of Lemma \ref{Bochner} here. 
\begin{proof}[Proof of Lemma \ref{Bochner}]
The lemma will be established by combining Hodge theory on compact manifolds with boundary and a vanishing argument. From Hodge theory:
\begin{align}
    H^1(M,\Sigma,\mathbb{R}) = \mathcal{H}^1_{R}(M),\,\,\,\, H^1(M,\mathbb{R}) = \mathcal{H}^1_{A}(M),
\end{align}
where 
\begin{align}
    \mathcal{H}^1_{R}(M) = \{\,\alpha\in\Lambda^1(M)\, : \,  d\alpha = d^*\alpha = 0,\,\,\, \alpha\wedge{n^*} = 0 \,\,\, on \,\,\, \Sigma\,  \}
\end{align}
\begin{align}
    \mathcal{H}^1_{A}(M) = \{\,\beta\in\Lambda^1(M)\, : \,  d\beta = d^*\beta = 0,\,\,\, \beta(n)= 0 \,\,\, on \,\,\, \Sigma\,  \}
\end{align}
and $n$ is the (outward) unit normal vector field on $\Sigma$ and $n^*$ is its dual $1$-form. 

Consider $\alpha\in\mathcal{H}^1_{R}(M)$. Bochner formula and some direct calculations on $\Sigma$ imply
\begin{align}
    \int_M|\nabla\alpha|^2 + Ric(\alpha,\alpha) \, dv = -\int_{\Sigma} H[\alpha(n)]^2 \, d\sigma
\end{align}
It is clear that $\alpha = 0$ if $Ric > 0$ and $H \geq 0$. Hence, ${H}^1(M,\Sigma,\mathbb{R}) = 0$.

Consider $\beta\in\mathcal{H}^1_{A}(M)$. Bochner formula and some direct calculations on $\Sigma$ imply 
\begin{align}
    \int_M|\nabla\beta|^2 + Ric(\beta,\beta) \, dv = -\int_{\Sigma} \sum_{i,j = 1}^{n-1} L_{ij}\beta(e_i)\beta(e_j) \, d\sigma
\end{align}
where $L$ is the second fundamental form of $\Sigma$ and $\{e_i\}_{i=1}^{n-1}$ is an orthonormal frame of $T\Sigma$. It is clear that $\beta = 0$ if $Ric > 0$ and $L \geq 0$. Hence, ${H}^1(M,\mathbb{R}) = 0$.
\end{proof}

It follows from Lemma \ref{Bochner} that $H^1(M^4,\Sigma^3,\mathbb{R}) = H^1(M^4, \mathbb{R}) = 0$. From the long exact sequence of the pair $(M,\Sigma)$, we have
\begin{align}
    \cdots \rightarrow H^0(M) \rightarrow H^0 (\Sigma) \rightarrow H^1(M,\Sigma) \rightarrow H^1(M) \rightarrow \cdots
\end{align}
Note that $M^4$ is assumed to be connected and thereby $H^0(M) = \mathbb{R}$. It follows that $H^0(\Sigma) = \mathbb{R}$. Therefore, the boundary $\Sigma^3$ is connected.

\end{proof}
 
 Clearly, from the proof of Lemma \ref{Bochner}, $H^1(M,\Sigma,\mathbb{R}) = 0$ can be proved under the conditions of positive Ricci curvature and nonnegative mean curvature. It is an interesting problem to ask if the conformal deformation established in \cite{Chen09} can be generalized to the case without umbilic condition on the boundary.

\subsection{Proof of Theorem \ref{hom}}
In this subsection, we prove Theorem \ref{hom}. We first apply conformal deformation to obtain a metric $g_w = e^{2w}g\in[g]$ such that $(M^4,\Sigma^3,g_w)$ satisfies $R_{g_w}>0$ and the boundary $(\Sigma^3,h_w)$ is totally geodesic. Then we consider the double manifold $N = M \bigcup_{\Sigma} {M'}$ with the metric $g_w$ which is $C^{2,\alpha}$ for any $0<\alpha<1$. We then perturb $g_w$ to construct a smooth metric $\widetilde{g}$ on $N$ such that $\widetilde{g}\in\mathcal{Y}_2^+(N)$ with $\beta(N^4,[\widetilde{g}])<8$. Theorem \ref{hom} then follows from Lemma 2.5 in \cite{CGZ} and a calculation with long exact sequences.

\begin{proof}[Proof of Theorem \ref{hom}]
From Lemma 1.1 in \cite{Esc92}, there is a metric $\hat{g} \in [g]$ such that $R_{\hat{g}} > 0$ and the boundary is minimal. Recall that we assume the boundary is umbilic and umbilicity is a conformally invariant condition. Hence, the boundary is totally geodesic. It follows from Lemma \ref{C2alpha} that the double manifold $(N,g_d)$ is a closed Riemannian manifold with $C^{2,\alpha}$ metric. Hence, the Riemannian curvature tensor is well-defined for $g_d$ and $(N^4,g_d)$ satisfies $0\leq\beta(N^4,[g_d]) <8$. With same argument in \cite{CQY04}, we may smooth the metric $g_d$ if necessary such that there is a smooth metric $\widetilde{g}$ satisfying $0\leq\beta(N^4,[\widetilde{g}]) <8$. From Lemma 2.5 in \cite{CGZ}, we have $b_2(N) = 0$ or $b_2(N) = 1$. We now rule out the possibility of $b_2(N) = 1$ by establishing the following lemma.
\begin{lemma}\label{topo}
Suppose $(N^4,\widetilde{g})$ is constructed as above. Then $b_2(N) = 0$.
\end{lemma}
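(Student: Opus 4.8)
The plan is to exploit the reflection symmetry that the double $N = M\bigcup_{\Sigma}M'$ possesses but a generic closed four-manifold does not. Since the preceding argument together with Lemma 2.5 in \cite{CGZ} already gives $b_2(N)\in\{0,1\}$, it suffices to rule out $b_2(N)=1$, and I would do this by showing that $N$ admits an orientation-reversing self-homeomorphism and that no closed oriented four-manifold with $b_2=1$ can admit one.

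First I would introduce the canonical involution $\tau\colon N\to N$ interchanging the two copies of $M$, so that $\tau([x,0])=[x,1]$ and $\tau([x,1])=[x,0]$. Its fixed-point set is exactly the separating hypersurface $\Sigma^3$, and in a normal collar $\Sigma^3\times(-\epsilon,\epsilon)$ with coordinate $\rho$ (equal to the distance function $r$ of Lemma \ref{boundary expansion} on one copy and to $-r$ on the other) it acts by $(\rho,x)\mapsto(-\rho,x)$. Thus $\tau$ is a reflection across the codimension-one submanifold $\Sigma^3$ and hence an orientation-reversing diffeomorphism of $N$. I would note that, because the deformed boundary is totally geodesic, $\tau$ is in fact an isometry of the $C^{2,\alpha}$ metric $g_d$ -- this is the ``additional symmetry'' alluded to after Theorem \ref{hom} -- but for the present purpose only the orientation-reversing property is needed, and this is purely topological and survives the smoothing to $\widetilde{g}$.

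Next I would pass to cohomology. Because $\tau$ reverses orientation we have $\tau_*[N]=-[N]$, so the induced map $\tau^*$ on $H^2(N;\mathbb{R})$ satisfies $Q(\tau^*a,\tau^*b)=-Q(a,b)$ for the intersection form $Q(a,b)=(a\cup b)[N]$; that is, $\tau^*$ is an anti-isometry of $Q$. Assume for contradiction that $b_2(N)=1$. By Poincar\'e duality $Q$ is nondegenerate, and a nondegenerate symmetric bilinear form on a one-dimensional space is definite. Choosing $0\ne a\in H^2(N;\mathbb{R})$ we have $Q(a,a)\ne0$, while $\tau^*a\ne0$ and $Q(\tau^*a,\tau^*a)=-Q(a,a)$ has the opposite sign, contradicting definiteness. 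Hence $b_2(N)\ne1$, and combined with $b_2(N)\in\{0,1\}$ this forces $b_2(N)=0$.

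The argument requires essentially no further analysis: all the analytic work -- the conformal deformation to totally geodesic boundary, the $C^{2,\alpha}$ regularity of $g_d$, the smoothing to $\widetilde{g}$, and the input $b_2(N)\in\{0,1\}$ from \cite{CGZ} -- has already been carried out. The only point I would justify carefully is the claim that $\tau$ reverses orientation; this is the ``main obstacle'' in name only, since it is immediate once one observes that $\tau$ is a normal reflection fixing the hypersurface $\Sigma^3$. Once this is in hand, the sign obstruction to a definite rank-one intersection form completes the proof.
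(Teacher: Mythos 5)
Your argument is correct, but it takes a genuinely different route from the paper. The paper rules out $b_2(N)=1$ homologically: using $H^1(M)=H^1(M,\Sigma)=0$ (from Theorem \ref{Connect}), $H^1(N)=0$, and $H^3(N)=0$, it runs the Mayer--Vietoris sequence for $N=M\bigcup_{\Sigma}M'$ and shows $H^2(N)\cong H^2(M)\oplus H^2(M)$, so $b_2(N)$ is even; combined with $b_2(N)\le 1$ this gives $b_2(N)=0$. You instead use the canonical swap involution $\tau$, which is an orientation-reversing self-homeomorphism since it is a normal reflection across the separating hypersurface $\Sigma^3$, to conclude that the intersection form admits an anti-isometry and hence that $\tau(N)=0$; this is incompatible with the definite rank-one form forced by $b_2(N)=1$. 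Both arguments are sound. Yours is more economical in hypotheses --- it does not use $H^1(M)=0$ or Theorem \ref{Connect} at all, only the doubling structure and orientability --- and in fact it yields $b_2^+(N)=b_2^-(N)$, hence evenness of $b_2(N)$, which would also substitute for Lemma \ref{TOP} later in the paper (and would immediately kill the sub-case $b_2^+=2$, $b_2^-=0$ in the proof of Theorem \ref{beta b 8}). What the paper's approach buys in exchange is the explicit isomorphism $H^2(N)\cong H^2(M)\oplus H^2(M)$, which the subsequent step of the proof of Theorem \ref{hom} feeds back into the exact sequence to compute $H^*(\Sigma)$ and conclude that $\Sigma$ is a homology sphere; your argument gives $b_2(N)=0$ but you would still need that exact sequence for the rest of the theorem. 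The one point you flag as needing care --- that $\tau$ reverses orientation --- is indeed fine: the double of an oriented manifold with boundary carries the orientation that restricts to the given one on $M$ and to its reverse on $M'$, and the reflection $(x,\rho)\mapsto(x,-\rho)$ in a collar reverses orientation there, hence everywhere since $N$ is connected.
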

\begin{proof}[Proof of Lemma \ref{topo}]
From Theorem \ref{Connect}, we have $H^1(M) = H^1(M,\Sigma) = 0$. From Corollary F in \cite{Gur98}, it follows that $H^1(N) = 0$. Note that Poincar\`e duality implies that $H^3(N) = 0$. Consider the long exact sequence for $N = M \bigcup_{\Sigma} M'$ and note that $M'$ is another copy of $M$:

\begin{align}\label{exact}
     H^1(M)\,\oplus\,H^1(M) \rightarrow H^1(\Sigma) \xrightarrow{i} H^2(N) \xrightarrow{j} H^2(M)\,\oplus\,H^2(M) \xrightarrow{k} H^2(\Sigma) \rightarrow H^3(N)
\end{align}
Note that $H^1(M)\,\oplus\,H^1(M) = H^3(N) =0$. From basic linear algebra, we have 
\begin{align}\label{Linear}
    H^2(N) = \ker{j}\oplus\Ima{j},\,\,\, H^2(M)\,\oplus\,H^2(M) = \ker{k}\oplus\Ima{k}.
\end{align}
From exactness, we have $i$ is injective and $k$ is surjective. In addition, we have
\begin{align}\label{Exact}
    \ker{j} = \Ima{i} = H^1(\Sigma),\,\, \ker{k} = \Ima{j},\,\, \Ima{k} = H^2(\Sigma).
\end{align}
Note that $\Sigma$ is a closed $3$-manifold and Poincar\'e duality thereby implies that $H^1(\Sigma) = H^2(\Sigma)$. Combining (\ref{Linear})(\ref{Exact}) and $H^1(\Sigma) = H^2(\Sigma)$, we obtain
\begin{align}
    H^2(N) = \ker{j}\oplus\Ima{j} = H^1(\Sigma)\oplus\Ima{j} = H^2(\Sigma)\oplus\ker{k} = H^2(M)\,\oplus\,H^2(M).
\end{align}
It follows that $b_2(N)$ is even and thereby $b_2(N) = 0$.
\end{proof}
Now we continue to prove Theorem \ref{hom}. 
Plug $H^2(N) = H^2(M)\,\oplus\,H^2(M) = 0$ into (\ref{exact}). 
We have $H^1(\Sigma) = H^2(\Sigma) =0$. From Theorem \ref{Connect}, $H^0(\Sigma) = H^3(\Sigma) = \mathbb{R}$. 
Hence, $\Sigma$ is a homology $3$-sphere. Note that the double manifold $N = M \bigcup_{\Sigma} M'$ admits a metric $\widetilde{g} \in \mathcal{Y}_2^+(N)$. Hence, Corollary B in \cite{CGY02} implies that $N$ admits a metric with positive Ricci curvature and thereby the universal cover $\widetilde{N}$ of $N$ is compact by Bonnet-Myers theorem. It is now clear that $b_2(\widetilde{N}) = 0$ and $\widetilde{N}$ is simply-connected. The celebrated work of Freedman \cite{Fre82} implies that $\widetilde{N}$ is homeomorphic to $S^4$.  Recall $(M^4,\Sigma^3)$ is assumed to be oriented. Hence, $N$ itself is homeomorphic to $S^4$. The homology of $M^4$ can now be calculated from the long exact sequence for $N = M \bigcup_{\Sigma} M'$ and the conclusion follows easily.
\end{proof}
\begin{remark}
Note that $0\leq\beta(N^4,[g])<8$ for a \emph{closed} Riemannian manifold $(N^4,g)$ implies that $b_2(N^4) \leq 1$ from Lemma 2.5 of \cite{CGZ}. In addition, $b_2(N^4) = 0$ and $b_2(N^4) = 1$ are realized by $(S^4,[g_{S^4}])$ and $(\mathbb{CP}^2,[g_{FS}])$, respectively. However, in the case of manifolds with boundary, if the boundary is assumed to be umbilic, the range $0\leq\beta_b(M^4,\Sigma^3,[g])<8$ implies that the double manifold must be $S^4$. Hence, $\mathbb{CP}^2$ cannot be realized as the double of any manifold with umbilic boundary under the condition $0\leq\beta_b(M^4,\Sigma^3,[g])<8$. This interesting phenomenon shows that the umbilic condition imposes additional symmetry on the double manifold.
\end{remark}

\subsection{Proof of Theorem \ref{dif}}
In this subsection, we prove Theorem \ref{dif}. We first apply conformal deformation to obtain a metric $g_w = e^{2w}g\in[g]$ such that $(M^4,\Sigma^3,g_w)$ satisfies $R_{g_w}>0$ and the boundary $(\Sigma^3,h_w)$ is totally geodesic. Then we consider the double manifold $N = M \bigcup_{\Sigma} {M'}$ with the metric $g_w$ which is $C^{2,\alpha}$ for any $0 < \alpha < 1$. We then perturb $g_w$ to construct a smooth metric $\widetilde{g}$ on $N$ such that $\widetilde{g}\in\mathcal{Y}_2^+(N)$ with $\beta(N^4,[\widetilde{g}])<4$. Theorem \ref{dif} then follows from the proof of  Theorem A in \cite{CGY03}.
\begin{proof}[Proof of Theorem \ref{dif}]
From Lemma 1.1 of \cite{Esc92}, there is a metric $\hat{g} \in [g]$ such that $R_{\hat{g}} > 0$ and the boundary is minimal. Recall that we assume the boundary is umbilic and umbilicity is a conformally invariant condition. Hence, the boundary is totally geodesic. From Lemma \ref{C2alpha}, the double $(N,g_d)$ is a closed Riemannian manifold with $C^{2,\alpha}$ metric. Hence, the Riemannian curvature tensor is well-defined for $g_d$ and $(N^4,g_d)$ satisfies $0\leq\beta(N^4,[g_d])<4$. With same argument in \cite{CQY04}, we may smooth the metric $g_d$ if necessary such that there is a smooth metric $\widetilde{g}$ satisfying $0\leq\beta(N^4,[\widetilde{g}]) <4$. Then it is easy to apply the arguments in \cite{CGY02}\cite{CGY03} by using Ricci flow to deform $(N^4,\widetilde{g})$ to the round $(S^4,g_{S^4})$ with the doubling property preserved all the way. For a detailed description of this process, see the last section of \cite{CQY04}. It easily follows that $M^4$ is diffeomorphic to $B^4$ and $\Sigma^3$ is diffeomorphic to $S^3$.
\end{proof}

\section{Proofs of Theorems \ref{beta b sphere}, \ref{conformal E sphere}, \ref{beta b and Weyl sphere}, and \ref{beta b 8}}\label{1.4,1.5,1.6,1.7}
\subsection{Proof of Theorem \ref{beta b sphere}}
In this subsection, we prove Theorem \ref{beta b sphere}. We shall establish the regularity of the metric on a double manifold arising from a Bach-flat four-manifold with boundary such that the boundary is umbilic and $S$-flat. Theorem \ref{beta b sphere} then follows from Theorem A.

Note that the second fundamental form appears in the expansion of metric (\ref{expansion near the boundary}) as the first order normal derivative. It turns out that the $S$-tensor is (up to a constant multiple) the third order normal derivative for a metric with constant scalar curvature and totally geodesic boundary. We start by the following lemma for manifolds with totally geodesic boundary.

\begin{lemma}\label{curvature tot geo}
Suppose $(M^4,\Sigma^3,g)$ is a smooth Riemannian manifold with totally geodesic boundary. Then we have on $\Sigma^3$
\begin{align}
    R_{j0} = 0,\,\,\,\,\,\, P_{j0} = 0, \,\,\,\,\,\,W_{ki0j} = 0,
\end{align}
and 
\begin{align}
    S_{ij} = \nabla^0P_{ij} = \nabla^0W_{0i0j}.
\end{align}
\end{lemma}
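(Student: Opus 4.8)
The plan is to work in Fermi (boundary normal) coordinates and to exploit the fact that a totally geodesic boundary forces the ``mixing'' Christoffel symbols to vanish along $\Sigma^3$. Writing $g = dr^2 + h_{ij}(x,r)\,dx^i dx^j$ as in Lemma \ref{boundary expansion}, the hypothesis $L_{ij}\equiv 0$ gives $h^{(1)}_{ij} = \partial_r h_{ij}|_{r=0} = 0$. A direct computation of the Christoffel symbols then yields $\Gamma^0_{ij} = -\tfrac12\partial_r h_{ij}$ and $\Gamma^k_{0j} = \tfrac12 h^{kl}\partial_r h_{lj}$, both of which vanish at $r=0$, while $\Gamma^0_{0\mu}$ and $\Gamma^k_{00}$ vanish identically since $g_{00}=1$ and $g_{0i}=0$. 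Thus along $\Sigma^3$ no Christoffel symbol carries exactly one normal index.

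First I would establish the three pointwise identities. The Codazzi equation expresses every component of the ambient Riemann tensor having exactly one normal index as a tangential covariant derivative of $L$; since $L\equiv 0$ on $\Sigma^3$ its tangential derivatives vanish, so all such components — in particular $R_{kjl0}$ and $R_{ki0j}$ — vanish on $\Sigma^3$. Contracting, $R_{j0} = h^{kl}R_{kjl0} = 0$, the normal contribution $R_{0j00}$ dying by antisymmetry, and hence $P_{j0} = \tfrac12\left(R_{j0} - \tfrac16 R\,g_{j0}\right) = 0$ because $g_{j0}=0$. Feeding these into the Weyl decomposition, in components $W_{ki0j} = R_{ki0j} - \left(P_{k0}g_{ij} - P_{kj}g_{i0} + P_{ij}g_{k0} - P_{i0}g_{kj}\right)$, and using $g_{k0}=g_{i0}=0$ together with $P_{k0}=P_{i0}=0$ kills every Schouten term, leaving $W_{ki0j} = R_{ki0j} = 0$.

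For the $S$-tensor identity I would start from the definition (\ref{S def}). Since $H = \operatorname{tr}L = 0$, the term $\tfrac43 H W_{0i0j}$ drops out. Splitting the divergence into normal and tangential parts gives $\nabla^\alpha W_{\alpha i 0 j} = \nabla^0 W_{0i0j} + \nabla^m W_{mi0j}$, and the heart of the argument is to show $\nabla^m W_{mi0j} = 0$ on $\Sigma^3$. Expanding $\nabla_l W_{mi0j}$ in Christoffel symbols, the bare tangential derivative $\partial_l W_{mi0j}$ vanishes because $W_{mi0j}\equiv 0$ all along $\Sigma^3$, while each Christoffel correction vanishes either because the offending symbol is one of the mixing symbols killed in the first step, or because it multiplies a Weyl component carrying a single normal index, already shown to vanish. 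Hence $\nabla^\alpha W_{\alpha i 0 j} = \nabla^0 W_{0i0j}$, and symmetrically $\nabla^\alpha W_{\alpha j 0 i} = \nabla^0 W_{0j0i} = \nabla^0 W_{0i0j}$, so the three-term combination in (\ref{S def}) collapses to $S_{ij} = \nabla^0 W_{0i0j}$. Finally, the identity $S_{ij} = \nabla^0 P_{ij}$ is exactly part (3) of Lemma \ref{S tensor}, so all three expressions agree.

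The main obstacle is the step $\nabla^m W_{mi0j} = 0$. Although $W_{mi0j}$ vanishes pointwise on $\Sigma^3$, its covariant derivative could a priori acquire normal contributions; indeed $\nabla^0 W_{0i0j}$ is generically nonzero — it is precisely the $S$-tensor. What rescues the computation is that the totally geodesic condition, and not merely the pointwise vanishing of $W_{mi0j}$, forces all normal–tangential Christoffel symbols to vanish along $\Sigma^3$, so no normal derivative can leak into the tangential divergence. I would be careful to track exactly which symbols $\Gamma$ appear in the expansion of $\nabla_l W_{mi0j}$ and to confirm that every surviving Weyl factor carries a single normal index.
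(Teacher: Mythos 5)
Your proof is correct, and the first half (Codazzi $\Rightarrow R_{ijk0}=0$, trace $\Rightarrow R_{j0}=P_{j0}=0$, Weyl decomposition $\Rightarrow W_{ki0j}=R_{ki0j}=0$) is exactly the paper's argument. Where you diverge is the final identity. The paper proves $\nabla^0P_{ij}=\nabla^{\alpha}W_{\alpha i0j}=\nabla^0W_{0i0j}$ by invoking the contracted second Bianchi identity (divergence of Weyl equals the Cotton tensor, so $\nabla^{\alpha}W_{\alpha i0j}=\nabla^0P_{ij}-\nabla^iP_{0j}$) together with the vanishing of the tangential derivatives $\nabla^iP_{0j}$ and $\nabla^kW_{ki0j}$ on $\Sigma^3$, and then quotes Lemma \ref{S tensor}(3) to bring in $S_{ij}$. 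You instead expand the definition (\ref{S def}) directly: $H=0$ kills the zeroth-order term, and your careful bookkeeping of the Fermi-coordinate Christoffel symbols shows $\nabla^mW_{mi0j}=0$, so the three divergence terms collapse to $S_{ij}=\nabla^0W_{0i0j}$; you then import $S_{ij}=\nabla^0P_{ij}$ from Lemma \ref{S tensor}(3). The two routes prove different legs of the same triangle of equalities and both hinge on the same computation (vanishing of all normal--tangential Christoffel symbols and of the tangential divergence of $W_{\cdot i0j}$ along a totally geodesic boundary); yours avoids the Bianchi/Cotton identity at the cost of a slightly longer index chase in the definition of $S$, and your closing remark correctly identifies the one genuinely delicate point, namely that pointwise vanishing of $W_{mi0j}$ on $\Sigma^3$ alone would not suffice without the vanishing of the mixing Christoffel symbols.
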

\begin{proof}
Recall the Gauss equations and Codazzi equations on $\Sigma^3$:
\begin{align}\label{Gauss}
    R_{ikjl} = R^{\Sigma}_{ikjl} - L_{ij}L_{kl} + L_{il}L_{jk},
\end{align}
\begin{align}\label{Codazzi}
    R_{ijk0} = -\nabla_{j}^{\Sigma}L_{ik} + \nabla_{i}^{\Sigma}L_{jk}.
\end{align}
For totally geodesic boundary, we have $L_{ij} \equiv 0$ and thereby on $\Sigma^3$:
\begin{align}\label{Gauss tot geo}
    R_{ikjl} = R^{\Sigma}_{ikjl}
\end{align}
\begin{align}\label{Codazzi tot geo}
    R_{ijk0} = 0
\end{align}
Taking the trace of (\ref{Codazzi tot geo}), we obtain on $\Sigma^3$
\begin{align}\label{R_0j tot geo}
    R_{0j} = 0,
\end{align}
which implies that on $\Sigma^3$ 
\begin{align}\label{P_0j tot geo}
    P_{0j} = 0
\end{align} 
since $g_{0j} = 0$ on $\Sigma^3$. Hence, we have the following decomposition of $R_{ki0j}$ on $\Sigma^3$:
\begin{align}
    {R}_{ki0j} = {W}_{ki0j} + g_{k0}P_{ij} + g_{ij}P_{k0} - g_{kj}P_{i0} - g_{i0}P_{kj} = W_{ki0j}.
\end{align}
Combining this equation with (\ref{Codazzi tot geo}), we have on $\Sigma^3$
\begin{align}\label{W_ki0j on boundary}
    {W}_{ki0j} = 0.
\end{align}
Note that (\ref{P_0j tot geo}) and (\ref{W_ki0j on boundary}) imply that for totally geodesic boundary
\begin{align}
    \nabla^iP_{0j} = \nabla^k{{W}_{k{i0j}}} = 0.
\end{align}
In fact, we may calculate at $p\in\Sigma^3$ in Fermi coordinates. We have $g_{\alpha\beta} = \delta_{\alpha\beta}$ and $\Gamma_{\alpha\beta}^{\gamma} = 0$ at $p\in\Sigma^{3}$ since the boundary is totally geodesic. Thus, we have at $p\in\Sigma$
\begin{align}
    \nabla^iP_{0j} = \partial_{i}P_{0j} - \Gamma_{i0}^{\alpha}P_{{\alpha}j} - \Gamma_{ij}^{\alpha}P_{0{\alpha}} = 0.
\end{align}
$\nabla^k{{W}_{k{i0j}}} = 0$ on $\Sigma^3$ follows similarly.

Hence, we have by the Bianchi identities on $\Sigma^3$
\begin{align}
    \nabla^0P_{ij} = \nabla^0P_{ij} - \nabla^iP_{0j} = \nabla^{\alpha}W_{\alpha{i0j}} = \nabla^{0}W_{0{i0j}}.
\end{align}
By (3) of Lemma {\ref{S tensor}}, we have on $\Sigma^3$
\begin{align}
    S_{ij} = \nabla^0W_{0i0j}.
\end{align}
\end{proof}

We establish the following lemma which is of independent interest. This lemma reveals the relationship between the $S$-tensor and the third order normal derivative of a metric with constant scalar curvature and totally geodesic boundary.

\begin{lemma}\label{S equal to h3}
Suppose $(M^4,\Sigma^3,g)$ is a smooth Riemannian manifold with constant scalar curvature and totally geodesic boundary. Then we have on $\Sigma^3$
\begin{align}
    h'''_{ij} = -4S_{ij}.
\end{align}
\end{lemma}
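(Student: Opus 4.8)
The plan is to read off $h'''_{ij}$ from the boundary expansion and then identify the resulting normal derivative of curvature with the $S$-tensor via Lemma \ref{curvature tot geo}. Since the boundary is totally geodesic we have $L_{ij}\equiv 0$, so the $h^{(3)}$-coefficient in (\ref{expansion near the boundary}) collapses and
\[
h'''_{ij} = h^{(3)}_{ij} = -2\nabla_0 R_{0i0j}\qquad\text{on }\Sigma^3 .
\]
Hence it suffices to prove that $\nabla_0 R_{0i0j} = 2S_{ij}$ on $\Sigma^3$.

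I would start from the decomposition of the Riemann tensor into its Weyl and Schouten parts,
\[
R_{0i0j} = W_{0i0j} + g_{00}P_{ij} + g_{ij}P_{00} - g_{0j}P_{i0} - g_{i0}P_{0j},
\]
differentiate in the outward normal direction, and use metric compatibility together with the collar form $g = dr^2 + h_{ij}\,dx^i dx^j$ (so that $g_{00}=1$ and $g_{0j}=0$ in a full neighborhood of $\Sigma^3$, not merely on it). The $g$-derivatives drop out, leaving
\[
\nabla_0 R_{0i0j} = \nabla_0 W_{0i0j} + \nabla_0 P_{ij} + g_{ij}\,\nabla_0 P_{00}.
\]
By Lemma \ref{curvature tot geo} we have $S_{ij} = \nabla^0 P_{ij} = \nabla^0 W_{0i0j}$, and since $g^{00}=1$, $g^{0j}=0$ near the boundary we may replace $\nabla^0$ by $\nabla_0$; thus the first two terms sum to $2S_{ij}$.

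The crux, and the only place where the constant scalar curvature hypothesis enters, is to show that the extra term $g_{ij}\nabla_0 P_{00}$ vanishes on $\Sigma^3$. The key tool is the contracted second Bianchi identity for the Schouten tensor, $\nabla^\alpha P_{\alpha\beta} = \nabla_\beta J$ with $J = \mathrm{tr}_g P = R/\big(2(n-1)\big)$; when $R$ is constant, $J$ is constant and $P$ is divergence-free. Taking $\beta=0$ and expanding the divergence in the collar coordinates gives
\[
0 = \nabla^\alpha P_{\alpha 0} = \nabla_0 P_{00} + g^{jk}\nabla_k P_{j0}.
\]
Finally, $\nabla_k P_{j0} = 0$ on the totally geodesic boundary: this is precisely the Fermi-coordinate computation establishing $\nabla_i P_{0j}=0$ in the proof of Lemma \ref{curvature tot geo}. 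Therefore $\nabla_0 P_{00}=0$ on $\Sigma^3$, so $\nabla_0 R_{0i0j} = 2S_{ij}$, and $h'''_{ij} = -2\nabla_0 R_{0i0j} = -4S_{ij}$ as claimed.

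I expect the routine part to be the careful bookkeeping of which terms survive once the block-diagonal collar structure is imposed; the genuine input is the observation that the single uncontrolled component $\nabla_0 P_{00}$ is pinned down by the constant scalar curvature assumption through the Bianchi identity. Without that hypothesis one would instead find $\nabla_0 P_{00} = \nabla_0 J = \tfrac{1}{6}\nabla_0 R$ on $\Sigma^3$, producing an additional $g_{ij}$-trace term in $h'''_{ij}$, which signals that the clean identity $h'''_{ij}=-4S_{ij}$ is special to the constant scalar curvature, totally geodesic setting.
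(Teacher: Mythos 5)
Your proposal is correct and follows the same overall architecture as the paper's proof: read off $h'''_{ij}=-2\nabla_0 R_{0i0j}$ from the boundary expansion once $L\equiv 0$, decompose $R_{0i0j}$ into Weyl and Schouten parts, invoke Lemma \ref{curvature tot geo} to identify $\nabla^0 W_{0i0j}+\nabla^0 P_{ij}=2S_{ij}$, and then kill the residual term $g_{ij}\nabla^0 P_{00}$. The one place you diverge is in that last step. The paper differentiates the trace identity $g^{\alpha\beta}P_{\alpha\beta}=R/6$ in the normal direction, which gives $\nabla^0P_{00}+g^{ij}\nabla^0P_{ij}=0$, and then uses that $\nabla^0P_{ij}=S_{ij}$ is trace-free. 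You instead use the contracted second Bianchi identity $\nabla^\alpha P_{\alpha 0}=\nabla_0 J$, so that constant $R$ makes $P$ divergence-free, and then observe that the tangential part $g^{jk}\nabla_k P_{j0}$ vanishes on $\Sigma^3$ by the same Fermi-coordinate computation already carried out in Lemma \ref{curvature tot geo}. Both mechanisms use the constant scalar curvature hypothesis in an essentially equivalent way and both are valid; the paper's version is marginally more self-contained (it reuses the trace-freeness of $S$ rather than re-expanding a divergence at the boundary), while yours makes more transparent exactly which component of the Bianchi identity is doing the work, and your closing remark about the extra $g_{ij}$-trace term appearing when $R$ is not constant is an accurate diagnosis of why the hypothesis is needed.
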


\begin{proof}
We calculate $h'''_{ij}$ on $\Sigma^3$ by (\ref{expansion near the boundary}):
\begin{align}\label{h'''}
    h'''_{ij} = -2\nabla^0{R}_{0i0j}+8L^k_{(i}{R}_{j)0k0}.  
\end{align}
For totally geodesic boundary, we have $L_{ij} = 0$ and thereby 
\begin{align}\label{h''' tot geo}
    h'''_{ij} = -2\nabla^0{R}_{0i0j}.
\end{align}
Note that $g_{0j} = 0$ and $g_{00} = 1$. We have the decomposition of curvature tensor:
\begin{align}
    {R}_{0i0j} = {g}_{00}{P}_{ij} + {g}_{ij}{P}_{00} - {g}_{0j}{P}_{i0} - {g}_{i0}{P}_{0j} + {W}_{0i0j} = {P}_{ij} + {g}_{ij}{P}_{00} + {W}_{0i0j}.
\end{align}
Taking covariant derivative in normal direction, we obtain on $\Sigma^3$
\begin{align}\label{normal Riem}
    \nabla^0{R}_{0i0j} = \nabla^0{W}_{0i0j} + \nabla^0P_{ij} + {g}_{ij}\nabla^0P_{00}.
\end{align}
By Lemma \ref{curvature tot geo}, we have on $\Sigma^3$
\begin{align}\label{normal R_0i0j on boundary}
    \nabla^0{R}_{0i0j} = 2S_{ij} + {g}_{ij}\nabla^0P_{00}.
\end{align}
We now calculate $\nabla^0P_{00}$ on $\Sigma^3$. Note that by definition
\begin{align}
    g^{\alpha\beta}P_{\alpha\beta} = \frac{1}{6}R.
\end{align}
Since $R$ is a constant, $g_{00} = 1$ and $g_{0j} = 0$, we take covariant derivative in normal direction to obtain on $\Sigma^3$
\begin{align}\label{normal R on boundary}
    \nabla^0P_{00} + g^{ij}\nabla^0P_{ij} = 0. 
\end{align}
By Lemma \ref{S tensor}, we have for totally geodesic boundary $S_{ij} = \nabla^0P_{ij}$ and $S$-tensor is trace-free. Therefore, we have on $\Sigma^3$
\begin{align}
    g^{ij}\nabla^0P_{ij} = 0.
\end{align}
Combining this equation with (\ref{normal R on boundary}), we obtain on $\Sigma^3$
\begin{align}\label{normal P_00 on boundary}
    \nabla^0P_{00} = 0. 
\end{align}
Putting (\ref{h''' tot geo})(\ref{normal R_0i0j on boundary}) and (\ref{normal P_00 on boundary}) together, we derive $h'''_{ij} =  -4 S_{ij}$. 
\end{proof}
With Lemma \ref{S equal to h3}, we are ready to prove Theorem \ref{beta b sphere}.
\begin{proof}[Proof of Theorem \ref{beta b sphere}]
Consider the Yamabe metric $g_Y\in[g]$ such that  $(M^4,\Sigma^3, g_Y)$ has constant scalar curvature and minimal boundary. From the assumption that the boundary is umbilic, minimality implies that the boundary is totally geodesic, i.e. $L_{ij} = 0$. In addition, by (\ref{expansion near the boundary}) and Lemma \ref{S equal to h3}, $S_{ij}=0$ implies that the expansion of $g_Y$ near the boundary $\Sigma^3$ has the form:
\begin{align}\label{Expansion beta}
g_Y = dr^2 + h + \frac{h^{(2)}}{2!}r^2 + \frac{h^{(4)}}{4!}r^4 + O(r^5)
\end{align}

Now consider the double manifold $N = M\bigcup_{\Sigma} M'$, where $M'$ is another copy of $M$.  The metric $g_Y$ naturally extends to a metric $g_d$ on $N$. From (\ref{Expansion beta}), it follows that $g_d$ is of class $C^{4,\alpha}$ for any $0<\alpha<1$. Note that $g_d$ is Bach-flat since $g_Y$ is Bach flat and Bach tensor is well-defined for $C^{4,\alpha}$ metric. It follows that $g_d$ is smooth from the discussion in Section 2.3. The condition $0 \leq \beta(M^4,\Sigma^3,[g]) < 4$ implies $\beta(N,[g_d]) <4$. It follows from Theorem A that $(M^4,g_{d})$ is conformally equivalent to $(S^4,g_{S^4})$. Now it is clear that $(M^4, \Sigma^3, g)$ is conformally equivalent to $(S^4_+,S^3,g_{S^4_+})$.
\end{proof}

\subsection{Proof of Theorem \ref{conformal E sphere}}
In this subsection, we prove Theorem \ref{conformal E sphere}. The proof relies on the expansion of Riemannian metric near the boundary and an application of Theorem C to the double manifold.
\begin{proof}[Proof of Theorem C]
Similar to the argument in the proof of Theorem \ref{beta b sphere}, we consider the double manifold $(N,g_d)$ arising from $(M^4,\Sigma^3,g_Y)$. Note that 
\begin{align}
\mathcal{E}([g]) = \int_M\sigma_2(P_{g_Y}) \, \, dv_{g_Y}
\end{align}
since $(M^4,\Sigma^3,g_Y)$ has totally geodesic boundary. Hence, $(N,g_d)$ is a closed Bach-flat four-manifold with 
\begin{align}
\int_N\sigma_2(P_{g_d}) \, dv_{g_d} = 2\mathcal{E}([g]) 
\end{align}
Taking $\epsilon_1 = \epsilon$ where $\epsilon$ is the constant in Theorem C, we have 
\begin{align}
    \int_N\sigma_2(P_{g_d}) \, dv_{g_d} = 2\mathcal{E}([g]) \geq 4(1-\epsilon)\pi^2.
\end{align}
Theorem \ref{conformal E sphere} now follows from Theorem C.
\end{proof}

\subsection{Proof of Theorem \ref{beta b and Weyl sphere}}
In this subsection, we prove Theorem \ref{beta b and Weyl sphere}. The proof relies on an upper bound for $\int_{N}\sigma_2(P_g)\,dv_{g}$ on a closed four-manifold $(N^4,g)$ proved by M. Gursky in \cite{Gur97}:
\begin{lemma}[\cite{Gur97}]\label{Gursky 97}
Suppose $(N^4,g)$ is a closed Riemannian four-manifold with $Y(N^4, [g]) \geq 0$. Then 
\begin{align}
\int_{N}\sigma_2(P_g)\, dv_g \leq 4\pi^2,
\end{align}
where equality holds if and only if $(N^4,g)$ is conformally equivalent to $(S^4,g_{S^4})$.
\end{lemma}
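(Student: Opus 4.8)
The plan is to combine the conformal invariance of $\int_N\sigma_2(P_g)\,dv_g$ with the sharp Yamabe inequality. The integral is a conformal invariant: by the Chern--Gauss--Bonnet formula (\ref{closed CGB}), $4\int_N\sigma_2(P_g)\,dv_g = 8\pi^2\chi(N) - \int_N\|W_g\|^2\,dv_g$, and both terms on the right are conformal invariants. Hence I may evaluate the left-hand side on any representative of $[g]$. By the resolution of the Yamabe problem I would choose the Yamabe minimizer $g_Y\in[g]$ with $\mathrm{Vol}(g_Y)=1$ and constant scalar curvature $R_{g_Y}\equiv Y(N,[g])=:Y\ge 0$.

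The second ingredient is the pointwise identity in dimension four. Writing $E=Ric-\tfrac14 Rg$ for the trace-free Ricci tensor and using $\mathrm{tr}\,P=R/6$ together with $\sigma_2(P)=\tfrac12\big((\mathrm{tr}\,P)^2-|P|^2\big)$, a short computation gives
\begin{align}
\sigma_2(P_g)=\frac{R^2}{96}-\frac18|E|^2.
\end{align}
Evaluating on $g_Y$, where $R\equiv Y$ is constant and the volume is one, yields
\begin{align}
\int_N\sigma_2(P_g)\,dv_g=\frac{Y^2}{96}-\frac18\int_N|E|^2\,dv_{g_Y}.
\end{align}

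The crucial input is Aubin's sharp inequality $Y(N,[g])\le Y(S^4,[g_{S^4}])$, together with the elementary computation $Y(S^4,[g_{S^4}])^2=384\pi^2$ on the round sphere. Since $Y\ge 0$, this gives $\tfrac{Y^2}{96}\le \tfrac{384\pi^2}{96}=4\pi^2$, and therefore $\int_N\sigma_2(P_g)\,dv_g\le 4\pi^2-\tfrac18\int_N|E|^2\,dv_{g_Y}\le 4\pi^2$, which is the asserted bound.

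For the equality case I would observe that equality forces both $\int_N|E|^2\,dv_{g_Y}=0$, i.e.\ $g_Y$ is Einstein, and $Y(N,[g])=Y(S^4)$. The rigidity part of Aubin's inequality --- equivalently, the classification of extremals in the sharp Sobolev inequality furnished by the resolution of the Yamabe problem --- then shows that $(N,[g])$ is conformally equivalent to the round sphere. Conversely, if $(N,g)$ is conformal to $(S^4,g_{S^4})$, the conformally invariant integral equals its value $4\pi^2$ on the round metric. The main obstacle is precisely this rigidity statement: the inequality itself is immediate once the pointwise identity and Aubin's bound are available, but characterizing equality requires the sharp equality case of Aubin's inequality rather than the inequality alone.
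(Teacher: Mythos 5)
Your proof is correct, and it is the standard argument for this inequality: the paper itself gives no proof but simply cites \cite{Gur97}, where the bound is obtained in essentially the way you describe --- pass to the unit-volume Yamabe metric, use the pointwise identity $\sigma_2(P)=\tfrac{1}{96}R^2-\tfrac18|E|^2$, and invoke Aubin's inequality $Y(N^4,[g])\le Y(S^4,[g_{S^4}])$ with $Y(S^4,[g_{S^4}])^2=384\pi^2$, the equality case following from the strictness of Aubin's inequality for manifolds not conformal to the round sphere. Your identification of the rigidity statement as the only nontrivial input is accurate.
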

With this lemma at hand, we now prove Theorem \ref{beta b and Weyl sphere}.
\begin{proof}[Proof of Theorem \ref{beta b and Weyl sphere}]
Similar to the argument in the proof of Theorem \ref{beta b sphere}, we consider the double manifold $(N,g_d)$ arising from $(M^4,\Sigma^3,g_Y)$. From Corollary F of \cite{Gur98}, we have $b_1(N) = 0$ and Poincar\'e duality easily implies $b_3(N) = 0$ and thereby $\chi(N) = 2 + b_2(N)$. Chern-Gauss-Bonnet formula reads
\begin{align}\label{GBC}
8\pi^2\chi(N) = \int_N||W_{g_d}||^2\, dv_{g_d} + 4\int_N\sigma_2(P_{g_d})\, dv_{g_d}.
\end{align}
Note that 
\begin{align}\label{Weyl g_d}
\int_N||W_{g_d}||^2\, dv_{g_d} = 2\int_M||W_{g_{Y}}||^2\, dv_{g_{Y}} <8\pi^2.
\end{align}
Combining (\ref{GBC})(\ref{Weyl g_d}) and Lemma \ref{Gursky 97}, we obtain
\begin{align}
8\pi^2\chi(N) < 16\pi^2 + 8\pi^2 = 24\pi^2
\end{align}
Therefore, $\chi(N) = 2 + b_2(N) < 3$ and thereby $b_2(N) = 0$ and $\chi(N) = 2$. Returning to (\ref{GBC}), we have
\begin{align}
4\int_N\sigma_2(P_{g_d})\, dv_{g_d}  = 16\pi^2 - \int_N||W_{g_d}||^2\, dv_{g_d} > 8\pi^2
\end{align}
Note that the boundary of $(M^4,\Sigma^3,g_{Y})$ is totally geodesic and thereby
\begin{align}
\mathcal{E}([g]) = \int_M\sigma_2(P_{g_Y}) \, dv_{g_Y}  = \frac{1}{2}\int_N\sigma_2(P_{g_d})\, dv_{g_d} > \pi^2
\end{align}
Therefore, we derive
\begin{align}
\beta_b(M^4,\Sigma^3,[g]) = \frac{\int_M||W_g||^2\, dv_g}{\mathcal{E}([g])} <\frac{4\pi^2}{\pi^2} = 4.
\end{align}
Theorem \ref{beta b and Weyl sphere} now follows from Theorem \ref{beta b sphere}.
\end{proof}
\subsection{Proof of Theorem \ref{beta b 8}}
In this subsection, we prove Theorem \ref{beta b 8}. There are two important ingredients in the proof. The first ingredient is a characterization of $(S^2 \times S^2,g_{{S^2}\times{S^2}})$ established in \cite{Z18}.

\begin{lemma}\label{Z}
Let $(N^4,g)$ be a closed Bach-flat manifold with $b_2^+(N^4)=b_2^-(N^4)>0$. There is an $\epsilon > 0$ such that if $g\in{\mathcal{Y}_2^+(N^4)}$ with
\begin{equation}\label{pinching S2S2}
8\leq\beta(N^4,[g])<8(1+\epsilon),
\end{equation}
then $(N^4,g)$ is conformally equivalent to $(S^2 \times S^2,g_{{S^2}\times{S^2}})$, where $g_{{S^2}\times{S^2}}$ is the standard product metric. In fact, $(N,g_{Y})$ is isometric to $(S^2 \times S^2,g_{{S^2}\times{S^2}})$.
\end{lemma}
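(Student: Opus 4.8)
The plan is to reduce Lemma \ref{Z} to a sharp curvature floor plus a concentration–compactness argument that supplies the gap constant $\epsilon$. Throughout I would work with the Yamabe representative $g_Y$ of the conformal class, which has constant positive scalar curvature (since $g\in\mathcal{Y}_2^+(N^4)$ forces $Y(N^4,[g])>0$) and realizes the conformally invariant quantities $\int_N\|W\|^2\,dv$, $\int_N\sigma_2(P)\,dv$, and $\beta$.

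First I would establish the \emph{floor} $\beta\ge 8$. The hypothesis $b_2^+=b_2^->0$ gives $\tau(N)=b_2^+-b_2^-=0$, so the signature formula $12\pi^2\tau=\int_N(\|W^+\|^2-\|W^-\|^2)\,dv$ yields $\int_N\|W^+\|^2\,dv=\int_N\|W^-\|^2\,dv$, hence $\int_N\|W\|^2\,dv=2\int_N\|W^\pm\|^2\,dv$. Since $b_2^\pm>0$ and $Y(N^4,[g])\ge 0$, the sharp estimate of Gursky \cite{Gur98} applies to both orientations and gives $\int_N\|W^\pm\|^2\,dv\ge \tfrac{4\pi^2}{3}(2\chi\pm 3\tau)=\tfrac{8\pi^2}{3}\chi$, so $\int_N\|W\|^2\,dv\ge\tfrac{16\pi^2}{3}\chi$. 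Combining this with the Chern–Gauss–Bonnet identity $8\pi^2\chi=\int_N\|W\|^2\,dv+4\int_N\sigma_2(P)\,dv$ to eliminate $\chi$ gives
\[
\int_N\|W\|^2\,dv\ \ge\ \tfrac{2}{3}\Big(\int_N\|W\|^2\,dv+4\int_N\sigma_2(P)\,dv\Big)
\]
and therefore $\int_N\|W\|^2\,dv\ge 8\int_N\sigma_2(P)\,dv$, i.e. $\beta\ge 8$, with equality precisely when both of Gursky's inequalities are equalities. The equality case forces $g_Y$ to be Kähler–Einstein of positive scalar curvature with respect to \emph{both} orientations; together with $\tau=0$ this isolates $(S^2\times S^2,g_{S^2\times S^2})$, which indeed sits at $\beta=8$.

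Second, to turn boundary-case rigidity into a genuine gap I would argue by contradiction in the Bach-flat category. Suppose there were Bach-flat metrics $g_i$ (in Yamabe form, unit volume) on manifolds satisfying the topological hypotheses with $8<\beta_i<8(1+\tfrac1i)$, none conformal to $S^2\times S^2$. The uniformly bounded Weyl energy, the positive Yamabe constant, and the fourth-order Bach-flat equation (elliptic in the constant-scalar-curvature gauge, as recorded in Section \ref{prelim}) yield $\varepsilon$-regularity and Cheeger–Gromov sub-convergence, away from finitely many points, to a Bach-flat orbifold limit with $\beta_\infty=8$; by the rigidity above this limit is the smooth manifold $(S^2\times S^2,g_{S^2\times S^2})$. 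A bubble at a concentration point would absorb a definite quantum of $\int\|W\|^2$ from a nontrivial Bach-flat ALE space, which is incompatible with the limit realizing exactly the value $\tfrac{64\pi^2}{3}$ attained by $S^2\times S^2$; hence no energy concentrates and the convergence is smooth. Local rigidity near the integrable critical metric $g_{S^2\times S^2}$ then forces $g_i$ to be $S^2\times S^2$ for large $i$, the desired contradiction, and the threshold of the energy-gap estimate produces the constant $\epsilon$.

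The hard part will be exactly this concentration–compactness step: one must prove a sharp energy-gap estimate ruling out nontrivial Bach-flat ALE bubbles and then promote smooth convergence into a local rigidity statement near $g_{S^2\times S^2}$, i.e. show that $\beta$ has a strict local minimum equal to $8$ along constraint-respecting Bach-flat metrics (an analogue, for $S^2\times S^2$, of the $\mathbb{CP}^2$ argument of \cite{CQY}). Identifying $\epsilon$ explicitly is not needed. Finally, the upgrade from ``conformally equivalent'' to ``$(N,g_Y)$ isometric to $S^2\times S^2$'' follows because the constant-scalar-curvature representative in the conformal class of a non-round Einstein metric is, up to scaling, the Einstein metric itself.
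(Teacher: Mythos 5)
First, a point of comparison: the paper does not prove Lemma \ref{Z} at all; it is imported verbatim from the author's preprint \cite{Z18} and used as a black box. So there is no ``paper proof'' to match your argument against, and your proposal has to be judged on its own. Your first step --- the floor $\beta\ge 8$ --- is correct and complete: $\tau(N)=0$ equalizes the self-dual and anti-self-dual Weyl energies, Gursky's sharp inequality from \cite{Gur98} applied to both orientations gives $\int_N\|W\|^2\,dv\ge\tfrac{16\pi^2}{3}\chi$, and eliminating $\chi$ via Chern--Gauss--Bonnet yields $\int_N\|W\|^2\,dv\ge 8\int_N\sigma_2(P)\,dv$. The equality analysis (conformal to a positive K\"ahler--Einstein metric for both orientations, hence two parallel K\"ahler forms, hence the product metric on $S^2\times S^2$ after excluding the round sphere by $b_2>0$) is only sketched but is standard and fillable.

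The genuine gap is in the second half, and you have correctly identified where it is but not closed it. In the sphere gap theorem (Theorem F) the compactness argument only needs to land the sequence strictly inside the open rigidity regime $\beta<4$, after which Theorem D finishes the job; here $\beta=8$ is the \emph{floor}, so there is no open-condition rigidity theorem to fall back on, and you must prove directly that $g_{S^2\times S^2}$ is isolated (a strict local minimum of $\beta$, or rigid under Bach-flat perturbations with the constraints) --- this is exactly the content of \cite{Z18} and of the Ricci-flow stability argument of \cite{CGZ} for $\mathbb{CP}^2$, and your proposal asserts it without argument. Likewise the no-bubbling step is asserted rather than proved: one must run the Tian--Viaclovsky/\cite{CQY} $\varepsilon$-regularity and bubble-tree analysis, show the orbifold limit still satisfies the hypotheses needed for the floor $\beta_\infty\ge 8$ (the topological condition $b_2^+=b_2^-$ must be tracked through orbifold degeneration), and verify that any nontrivial Bach-flat ALE bubble or orbifold point carries a definite Weyl-energy quantum incompatible with the energy accounting $\int\|W\|^2\to\tfrac{64\pi^2}{3}$. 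As written, the proposal is a correct program with the two hardest analytic steps left open, so it does not constitute a proof of Lemma \ref{Z}; the honest fix in the context of this paper is to cite \cite{Z18}, as the author does.
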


The second ingredient is a topological lemma which is a generalization of Lemma \ref{topo}.

\begin{lemma}\label{TOP}
Suppose $(M^4, \Sigma^3)$ is a manifold with boundary and $N = M\bigcup_{\Sigma} M'$ is the double of $(M^4, \Sigma^3)$. In addition, we assume $H^1(M) = H^1(M,\Sigma) = H^1(N) = 0$. Then $b_2(N)$ is even.
\end{lemma}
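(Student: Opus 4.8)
The plan is to reproduce the Mayer--Vietoris computation already carried out in the proof of Lemma \ref{topo}, but to stop one step earlier: instead of combining it with the bound $b_2(N)\le 1$ from \cite{CGZ} to force $b_2(N)=0$, I only extract the parity statement, which follows from the stated cohomological hypotheses alone.

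First I would record two consequences of Poincar\'e duality. Since $N$ is the double of an oriented manifold it is itself closed and oriented, so over $\mathbb{R}$ one has $b_3(N)=b_1(N)$; hence the hypothesis $H^1(N)=0$ gives $H^3(N)=0$. Likewise $\Sigma=\partial M$ is a closed, oriented three-manifold, so Poincar\'e duality yields $b_1(\Sigma)=b_2(\Sigma)$.

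Next I would write the Mayer--Vietoris sequence for $N=M\bigcup_{\Sigma}M'$ exactly as in (\ref{exact}):
\[
H^1(M)\oplus H^1(M)\to H^1(\Sigma)\xrightarrow{i} H^2(N)\xrightarrow{j} H^2(M)\oplus H^2(M)\xrightarrow{k} H^2(\Sigma)\to H^3(N).
\]
The hypothesis $H^1(M)=0$ makes the first term vanish, so $i$ is injective, while $H^3(N)=0$ makes $k$ surjective. By exactness, $\ker j=\Ima i\cong H^1(\Sigma)$, $\Ima j=\ker k$, and $\Ima k=H^2(\Sigma)$. A dimension count then gives $b_2(N)=\dim\ker j+\dim\Ima j=b_1(\Sigma)+\dim\ker k$ and $2\,b_2(M)=\dim\ker k+b_2(\Sigma)$. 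Substituting $b_1(\Sigma)=b_2(\Sigma)$ produces $b_2(N)=2\,b_2(M)$, which is even.

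I do not expect a genuine obstacle, since this is the same exactness-and-dimension argument used for Lemma \ref{topo}, with $H^1(M)=0$ and $H^1(N)=0$ now taken as hypotheses rather than derived from Theorem \ref{Connect} and Corollary F of \cite{Gur98}. The only points needing care are the orientability of $N$ and of $\Sigma$ (both inherited from the orientability of $M$), which is what licenses the two Poincar\'e duality statements, and the observation that the third hypothesis $H^1(M,\Sigma)=0$ is in fact not used for the parity conclusion; it is retained only to match the setting in which Lemma \ref{TOP} will be invoked.
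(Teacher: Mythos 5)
Your argument is correct and is essentially identical to the paper's own proof: the same Mayer--Vietoris sequence for $N=M\bigcup_{\Sigma}M'$, the same use of $H^1(M)=0$ and $H^3(N)=0$ (via Poincar\'e duality from $H^1(N)=0$) to trap $H^2(N)$ between $H^1(\Sigma)$ and $\ker k$, and the same appeal to $b_1(\Sigma)=b_2(\Sigma)$ to conclude $b_2(N)=2b_2(M)$. Your side remark that the hypothesis $H^1(M,\Sigma)=0$ is not actually needed for the parity conclusion is also consistent with the paper's proof, which likewise never invokes it.
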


\begin{proof}[Proof of Lemma \ref{TOP}]
Note that $H^3(N) = 0$ by Poincar\'e duality. Consider the long exact sequence for $N = M \bigcup_{\Sigma} M'$ and note that $M'$ is another copy of $M$:
\begin{align}
     H^1(M)\,\oplus\,H^1(M) \rightarrow H^1(\Sigma) \xrightarrow{i} H^2(N) \xrightarrow{j} H^2(M)\,\oplus\,H^2(M) \xrightarrow{k} H^2(\Sigma) \rightarrow H^3(N)
\end{align}
Note that $H^1(M)\,\oplus\,H^1(M) = H^3(N) =0$. From basic linear algebra, we have 
\begin{align}
    H^2(N) = \ker{j}\oplus\Ima{j},\,\,\, H^2(M)\,\oplus\,H^2(M) = \ker{k}\oplus\Ima{k}.
\end{align}
From exactness, we have $i$ is injective and $k$ is surjective. In addition, we have
\begin{align}
    \ker{j} = \Ima{i} = H^1(\Sigma),\,\, \ker{k} = \Ima{j},\,\, \Ima{k} = H^2(\Sigma).
\end{align}
Note that $\Sigma$ is a closed $3$-manifold and Poincar\'e duality thereby implies that $H^1(\Sigma) = H^2(\Sigma)$. Combining (\ref{Linear})(\ref{Exact}) and $H^1(\Sigma) = H^2(\Sigma)$, we derive
\begin{align}
    H^2(N) = \ker{j}\oplus\Ima{j} = H^1(\Sigma)\oplus\Ima{j} = H^2(\Sigma)\oplus\ker{k} = H^2(M)\,\oplus\,H^2(M).
\end{align}
It then follows that $b_2(N)$ is even.
\end{proof}

We are now at the position to prove Theorem \ref{beta b 8}.

\begin{proof}[Proof of Theorem \ref{beta b 8}]
By Theorem \ref{hom} we assume without loss of generality that
\begin{align}
    8 \leq \beta_b(M^4,\Sigma^3,[g]) < 8(1+\epsilon_2).
\end{align}
Similar to the argument in the proof of Theorem \ref{beta b sphere}, we consider the double manifold $(N,g_d)$ arising from $(M^4,\Sigma^3,g_Y)$. 
Similar to the proof of Theorem 1.2, we have $b_1(N) = b_3(N) = 0$ and $\chi(N) = 2 + b_2(N)$. By Theorem \ref{beta b sphere}, we have $H^1(M) = H^1(M,\Sigma) = 0$. It then follows from Lemma \ref{TOP} that $b_2(N)$ is even. Chern-Gauss-Bonnet formula reads
\begin{align}
8\pi^2\chi(N) = \int_N \, ||W_{g_d}||^2 \, \, dv_{g_d} + 4\int_N \, \sigma_2(g_d) \, \, dv_{g_d}.
\end{align}
By Lemma \ref{Gursky 97} we have
\begin{align}
    4\int_N \, \sigma_2(g_d) \, \, dv_{g_d} \leq 16\pi^2 
\end{align} 
Set $\beta_b(M^4,\Sigma^3,g) = 8(1+\epsilon_2)$. Then
\begin{align}
    8\pi^2\chi(N) = (12+8\epsilon_2) \int_N \, \sigma_2(g_d) \, \, dv_{g_d} \leq (3+2\epsilon_2) 16\pi^2
\end{align}
Recall $\chi(N) = 2 + b_2(N)$. Hence, $2 + b_2(N) \leq 6 + 4\epsilon_2$. If we assume $\epsilon_2 < \frac{1}{4}$, then $b_2(N) \leq 4$ since $b_2(N)$ is an integer.
Since $b_2(N)$ is even, we only need to consider three cases $b_2(N) = 0$, $b_2(N) = 2$, and $b_2(N) = 4$. We shall show that only $b_2(N) = 0$ can happen if $\epsilon_2$ is chosen to be sufficiently small.

\textbf{Case 1: $b_2(N) = 0$.} Note that the double manifold $N = M \bigcup_{\Sigma} M'$ admits a metric $\widetilde{g} \in \mathcal{Y}_2^+(N)$. Hence, Corollary B of \cite{CGY02} implies that $N$ admits a metric with positive Ricci curvature and thereby the universal cover $\widetilde{N}$ of $N$ is compact by Bonnet-Myers theorem. It is now clear that $b^2(\widetilde{N}) = 0$ and $\widetilde{N}$ is simply-connected. The celebrated work of Freedman \cite{Fre82} implies that $\widetilde{N}$ is homeomorphic to $S^4$.  Recall $(M^4,\Sigma^3)$ is assumed to be oriented. Hence, $N$ itself is homeomorphic to $S^4$. The homology of $M^4$ can now be calculated from the long exact sequence for $N = M \bigcup_{\Sigma} M'$ and the conclusion follows easily.

\textbf{Case 2: $b_2(N) = 2$.} Then $\chi(N) = 4$. There are two sub-cases to consider.

Sub-case 1: $b_2^+(N) = 2$ and $b_2^-(N) = 0$. Hence, we have $\tau(N) = 2$. Signature formula reads
\begin{align}
    24\pi^2 = 12\pi^2\tau(N) = \int_N \, ||W^+_{g_d}||^2 \, \, dv_{g_d} - \int_N \, ||W^-_{g_d}||^2 \, \, dv_{g_d},
\end{align}
which implies
\begin{align}\label{Weyl L2}
    \int_N \, ||W_{g_d}||^2 \, \, dv_{g_d} = \int_N \, ||W^+_{g_d}||^2 \, \, dv_{g_d} + \int_N \, ||W^-_{g_d}||^2 \, \, dv_{g_d} \geq 24\pi^2.
\end{align}
Note that
\begin{align}\label{beta gd}
    \beta_b(M^4,\Sigma^3,g) = \dfrac{\int_M||W_{g_d}||^2\, dv_{g_d}}{\int_M{\sigma_2(P_{g_d})}\, dv_{g_d}} = 8(1+\epsilon_2)
\end{align}
Hence, combining (\ref{Weyl L2}) and (\ref{beta gd}) we have
\begin{align}
        \int_N \, \sigma_2(g_d) \, \, dv_{g_d} \geq \frac{3}{1+\epsilon_2}\pi^2.
\end{align}

Plugging these two inequalities back into Chern-Gauss-Bonnet formula, we derive
\begin{align}
    32\pi^2 = 8\pi^2\chi(N) = \int_N \, ||W_{g_d}||^2 \, \, dv_{g_d} + 4\int_N \, \sigma_2(g_d) \, \, dv_{g_d} \geq \left(24 + \frac{12}{1+\epsilon_2}\right)\pi^2.
\end{align}
If we assume $\epsilon_2 < \frac{1}{2}$, this inequality cannot hold. Therefore, $b_2^+(N) = 2$ and $b_2^-(N) = 0$ cannot happen.

Sub-case 2: $b_2^+(N) = 1$ and $b_2^-(N) = 1$. It follows from Lemma \ref{Z} that $(N,g_d)$ is isometric to $(S^2 \times S^2, g_{{S^2}\times{S^2}})$ provided that $\epsilon_2$ is chosen sufficiently small. In this case, $(\Sigma^3,h)$ is a totally geodesic hypersurface in $(S^2 \times S^2, g_{{S^2}\times{S^2}})$, which contradicts the classification of totally geodesic submanifolds in $(S^2 \times S^2, g_{{S^2}\times{S^2}})$. Indeed, Corollary 3.1 of \cite{ChenNagano} implies that there is no totally geodesic hypersurface in $(S^2 \times S^2, g_{{S^2}\times{S^2}})$. Therefore, $b_2^+(N) = 1$ and $b_2^-(N) = 1$ cannot happen.

\textbf{Case 3: $b_2(N) = 4$.} Then $\chi(N) = 6$ and Chern-Gauss-Bonnet formula reads
\begin{align}\label{CGB b_2 4}
    48\pi^2 = 8\pi^2\chi(N) = \int_N \, ||W_{g_d}||^2 \, \, dv_{g_d} + 4\int_N \, \sigma_2(g_d) \, \, dv_{g_d}.
\end{align}
Note that
\begin{align}\label{beta b last}
    \beta_b(M^4,\Sigma^3,g) = \dfrac{\int_M||W_{g_d}||^2\, dv_{g_d}}{\int_M{\sigma_2(P_{g_d})}\, dv_{g_d}} = 8(1+\epsilon_2).
\end{align}
Hence, we obtain by combining (\ref{CGB b_2 4}) and (\ref{beta b last})
\begin{align}
    \int_N \, \sigma_2(g_d) \, \, dv_{g_d} = \frac{24}{6 + 4\epsilon_2}\pi^2.
\end{align}
It then follows from Theorem C that $(N^4,g_d)$ is conformally equivalent to $(S^4,g_{S^4})$ if $\epsilon_2$ is chosen sufficiently small since $(N^4,g_{d})$ is Bach-flat, which  contradicts the fact $b_2(N) = 4$. Therefore, $b_2(N) = 4$ cannot happen.
\end{proof}

\section{Further remarks}\label{final}
In this section, we have a concise discussion of the moduli spaces of the critical metrics on four-manifolds with boundary. On {closed} four-manifolds, the moduli spaces of Einstein metrics and Bach-flat metrics have been studied extensively in \cite{And}\cite{BKN89}\cite{TV05a}\cite{TV05b}. In addition, diffeomorphic finiteness for Einstein $n$-manifolds with bounded $L^{n/2}$ norm of curvature has been proved by M. Anderson and J. Cheeger in \cite{AC91}. In fact, they established a more general result under the condition of bounded Ricci curvature and bounded $L^{n/2}$ norm of curvature. For Bach-flat four-manifolds of positive Yamabe type with uniform lower bound for total integral of $\sigma_2$-curvature and bounded $L^{2}$ norm of Weyl curvature, diffeomorphic finiteness has been proved by S.-Y. A. Chang, J. Qing, and P. Yang in \cite{CQY}. These two diffeomorphic finiteness theorems are both established by the construction of bubble trees based on the precise study of moduli spaces of critical metrics. 

For the moduli spaces of the critical metrics on four-manifolds with boundary considered in this note, the additional difficulty is how to analyze the (possible) degeneration behaviour of points on the boundary. However, we may consider the double manifold $(N^4,g_{d})$ which is a closed manifold with a {smooth} Riemannian metric from the discussion in Section \ref{prelim}. In this way, boundary points can be ``transformed'' as interior points and analysis on closed manifolds can be applied. Based on this observation, we similarly establish compactness and diffeomorphic finiteness theorems for critical metrics on four-manifolds with boundary. The details will appear somewhere else.

\bibliographystyle{amsplain}

\end{document}